\documentclass[11pt,a4paper]{article}

\usepackage{titlesec}
\usepackage{fancyhdr}
\usepackage{a4wide}
\usepackage{graphicx}
\usepackage{float}
\usepackage{amssymb}
\usepackage{amsmath}
\usepackage{amsfonts}
\usepackage{dsfont}
\usepackage{amsthm}
\usepackage{color}
\usepackage{mathrsfs}
\usepackage{array}
\usepackage{eucal}
\usepackage{tikz}
\usepackage[T1]{fontenc}
\usepackage{inputenc}
\usepackage[english]{babel}
\usepackage{lmodern}
\usepackage{hyperref}
\usepackage{geometry}
\usepackage{changepage}
\usepackage{bm}
\geometry{hmargin=2.2cm, vmargin=2.2cm }
\changepage{0pt}{}{}{}{}{0pt}{}{0pt}{6pt}
\usepackage[numbers]{natbib}
\setlength{\bibsep}{0.0pt}
\usepackage{bbm}

\usepackage{hyperref}
\hypersetup{
pdfpagemode=none,
pdftoolbar=true,        
pdfmenubar=true,        
pdffitwindow=false,     
pdfstartview={Fit},    
pdftitle={Spectrum of the Dirichlet Laplacian in a thin cubic lattice},    
pdfauthor={L. Chesnel, S.A. Nazarov},     
pdfsubject={},  
pdfcreator={L. Chesnel, S.A. Nazarov},   
pdfproducer={}, 
pdfkeywords={}, 
pdfnewwindow=true,      
colorlinks=true,       
linkcolor=magenta,          
citecolor=red,        
filecolor=cyan,      
urlcolor=blue           
}

\newcommand{\dsp}{\displaystyle}

\newcommand{\eps}{\varepsilon}
\newcommand{\om}{\omega}
\newcommand{\Om}{\Omega}
\newcommand{\mrm}[1]{\mathrm{#1}}

\newcommand{\Cplx}{\mathbb{C}}
\newcommand{\N}{\mathbb{N}}
\newcommand{\R}{\mathbb{R}}
\newcommand{\Z}{\mathbb{Z}}

\newcommand{\mL}{\mrm{L}}
\newcommand{\mH}{\mrm{H}}

\renewcommand{\ker}{\mrm{ker}}

\renewcommand{\dim}{\mrm{dim}}

\newtheorem{theorem}{Theorem}[section]
\newtheorem{lemma}[theorem]{Lemma}
\newtheorem{remark}[theorem]{Remark}

\newtheorem{proposition}[theorem]{Proposition}

\begin{document}

~\vspace{-0.4cm}
\begin{center}
{\sc \bf\huge  
Spectrum of the Dirichlet Laplacian\\[6pt] in a thin cubic lattice}
\end{center}

\begin{center}
\textsc{Lucas Chesnel}$^1$, \textsc{Sergei A. Nazarov}$^{2}$\\[16pt]
\begin{minipage}{0.95\textwidth}
{\small
$^1$ Inria, Ensta Paris, Institut Polytechnique de Paris, 828 Boulevard des Mar\'echaux, 91762 Palaiseau, France;\\
$^2$ Institute of Problems of Mechanical Engineering RAS, V.O., Bolshoi pr., 61, St. Petersburg, 199178, Russia;\\
E-mails: \texttt{lucas.chesnel@inria.fr}, \texttt{srgnazarov@yahoo.co.uk} \\[-14pt]
\begin{center}
(\today)
\end{center}
}
\end{minipage}
\end{center}
\vspace{0.2cm}

\noindent\textbf{Abstract.} We give a description of the lower part of the spectrum of the Dirichlet Laplacian in an unbounded 3D periodic lattice made of thin bars (of width $\eps\ll1$) which have a square cross section. This spectrum coincides with the union of segments which all go to $+\infty$ as $\eps$ tends to zero due to the Dirichlet boundary condition. We show that the first spectral segment is extremely tight, of length $O(e^{-\delta/\eps})$, $\delta>0$, while the length of the next spectral segments is $O(\eps)$. To establish these results, we need to study in detail the properties of the Dirichlet Laplacian $A^{\Om}$ in the geometry $\Om$ obtained by zooming at the junction regions of the initial periodic lattice. This problem has its own interest and playing with symmetries together with max-min arguments as well as a well-chosen Friedrichs inequality, we prove that $A^{\Om}$ has a unique eigenvalue in its discrete spectrum, which generates the first spectral segment. Additionally we show that there is no threshold resonance for $A^{\Om}$, that is no non trivial bounded solution at the threshold frequency for $A^{\Om}$. This implies that the correct 1D model of the lattice for the next spectral segments is a graph with Dirichlet conditions at the vertices. We also present numerics to complement the analysis.\\

\noindent\textbf{Key words.} Quantum waveguide, thin periodic lattice, threshold resonance, trapped waves. 

\section{Introduction}

\begin{figure}[!ht]
\centering
\includegraphics[width=7cm]{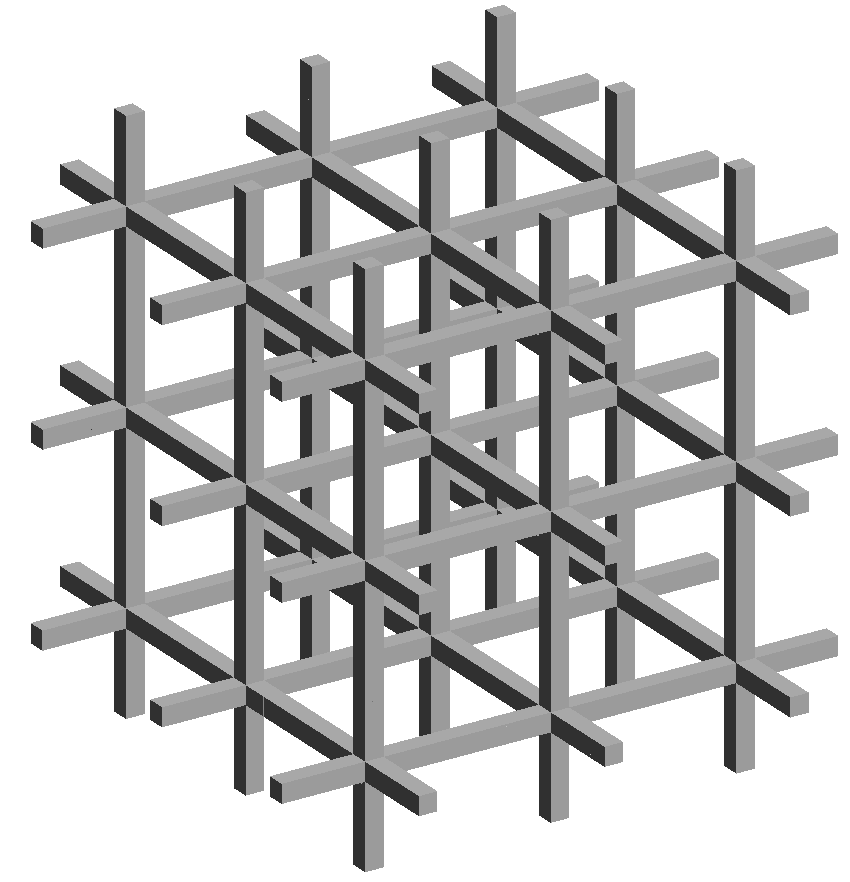}
\qquad\begin{tikzpicture}[scale=0.7]
\pgfmathsetmacro{\cubexa}{8}
\pgfmathsetmacro{\cubeya}{1}
\pgfmathsetmacro{\cubeza}{1}

\draw[black,dotted] (-0.5,-\cubexa/2,-0.5) -- ++(0,-0.8,0);
\draw[black,dotted] (-\cubexa/2,-0.5,-0.5) -- ++(-0.8,0,0);

\draw[black,fill=gray!30] (0.5,0.5,-0.5) -- ++(0,0,-\cubexa/2+0.5) -- ++(-\cubeya,0,0) -- ++(0,0,\cubexa/2-0.5) -- cycle;
\draw[black,fill=gray!30] (0.5,0.5,-0.5) -- ++(0,-\cubeza,0) -- ++(-\cubeya,0,0) -- ++(0,\cubeza,0) -- cycle;
\draw[black,fill=gray!30] (0.5,0.5,-0.5) -- ++(0,0,-\cubexa/2+0.5) -- ++(0,-\cubeza,0) -- ++(0,0,\cubexa/2-0.5) -- cycle;

\draw[black,fill=gray!30] (0.5,\cubexa/2,0.5) -- ++(0,-\cubexa,0) -- ++(0,0,-\cubeya) -- ++(0,\cubexa,0) -- cycle;
\draw[black,fill=gray!30] (0.5,\cubexa/2,0.5) -- ++(-\cubeza,0,0) -- ++(0,0,-\cubeya) -- ++(\cubeza,0,0) -- cycle;
\draw[black,fill=gray!30] (0.5,\cubexa/2,0.5) -- ++(0,-\cubexa,0) -- ++(-\cubeza,0,0) -- ++(0,\cubexa,0) -- cycle;

\draw[black,fill=gray!30] (\cubexa/2,0.5,0.5) -- ++(-\cubexa,0,0) -- ++(0,-\cubeya,0) -- ++(\cubexa,0,0) -- cycle;
\draw[black,fill=gray!30] (\cubexa/2,0.5,0.5) -- ++(0,0,-\cubeza) -- ++(0,-\cubeya,0) -- ++(0,0,\cubeza) -- cycle;
\draw[black,fill=gray!30] (\cubexa/2,0.5,0.5) -- ++(-\cubexa,0,0) -- ++(0,0,-\cubeza) -- ++(\cubexa,0,0) -- cycle;

\draw[black,fill=gray!30] (0.5,\cubexa/2,0.5) -- ++(0,-\cubexa/2+0.5,0) -- ++(0,0,-\cubeya) -- ++(0,\cubexa/2-0.5,0) -- cycle;
\draw[black,fill=gray!30] (0.5,\cubexa/2,0.5) -- ++(-\cubeza,0,0) -- ++(0,0,-\cubeya) -- ++(\cubeza,0,0) -- cycle;
\draw[black,fill=gray!30] (0.5,\cubexa/2,0.5) -- ++(0,-\cubexa,0) -- ++(-\cubeza,0,0) -- ++(0,\cubexa,0) -- cycle;

\draw[black,fill=gray!30] (0.5,0.5,\cubexa/2) -- ++(0,0,-\cubexa/2+0.5) -- ++(-\cubeya,0,0) -- ++(0,0,\cubexa/2-0.5) -- cycle;
\draw[black,fill=gray!30] (0.5,0.5,\cubexa/2) -- ++(0,-\cubeza,0) -- ++(-\cubeya,0,0) -- ++(0,\cubeza,0) -- cycle;
\draw[black,fill=gray!30] (0.5,0.5,\cubexa/2) -- ++(0,0,-\cubexa/2+0.5) -- ++(0,-\cubeza,0) -- ++(0,0,\cubexa/2-0.5) -- cycle;
\draw[black,dotted] (\cubexa/2,0.5,0.5) -- ++(0.8,0,0);
\draw[black,dotted] (\cubexa/2,-0.5,0.5) -- ++(0.8,0,0);
\draw[black,dotted] (\cubexa/2,0.5,-0.5) -- ++(0.8,0,0);
\draw[black,dotted] (\cubexa/2,-0.5,-0.5) -- ++(0.8,0,0);
\draw[black,dotted] (-\cubexa/2,0.5,0.5) -- ++(-0.8,0,0);
\draw[black,dotted] (-\cubexa/2,-0.5,0.5) -- ++(-0.8,0,0);
\draw[black,dotted] (-\cubexa/2,0.5,-0.5) -- ++(-0.8,0,0);

\draw[black,dotted] (0.5,\cubexa/2,0.5) -- ++(0,0.8,0);
\draw[black,dotted] (0.5,\cubexa/2,-0.5) -- ++(0,0.8,0);
\draw[black,dotted] (-0.5,\cubexa/2,0.5) -- ++(0,0.8,0);
\draw[black,dotted] (-0.5,\cubexa/2,-0.5) -- ++(0,0.8,0);
\draw[black,dotted] (0.5,-\cubexa/2,0.5) -- ++(0,-0.8,0);
\draw[black,dotted] (0.5,-\cubexa/2,-0.5) -- ++(0,-0.8,0);
\draw[black,dotted] (-0.5,-\cubexa/2,0.5) -- ++(0,-0.8,0);

\draw[black,dotted] (0.5,0.5,\cubexa/2) -- ++(0,0,0.8);
\draw[black,dotted] (-0.5,0.5,\cubexa/2) -- ++(0,0,0.8);
\draw[black,dotted] (0.5,-0.5,\cubexa/2) -- ++(0,0,0.8);
\draw[black,dotted] (-0.5,-0.5,\cubexa/2) -- ++(0,0,0.8);
\draw[black,dotted] (0.5,0.5,-\cubexa/2) -- ++(0,0,-0.8);
\draw[black,dotted] (-0.5,0.5,-\cubexa/2) -- ++(0,0,-0.8);
\draw[black,dotted] (0.5,-0.5,-\cubexa/2) -- ++(0,0,-0.8);
\draw[black,dotted] (-0.5,-0.5,-\cubexa/2) -- ++(0,0,-0.8);
\end{tikzpicture}
\caption{Left: unbounded periodic cubic lattice made of thin bars of width $\eps$ with square cross-sections. Right: geometry $\Om$ of the near field, aka boundary layer, problem at the junction regions. \label{PeriodicLattice}}
\end{figure}

With the profusion of works related to graphene in physics, an important effort has been made in the mathematical community to understand the asymptotic behaviour of the spectrum of the Dirichlet operator in quantum waveguides made of thin ligaments, of characteristic width $\eps\ll1$, forming unbounded periodic lattices. Various geometries have been considered and we refer the reader to \cite{Kuch02} for a review article. 
To address such problems, the general approach can be summarized as follows. Using the Floquet-Bloch-Gelfand theory \cite{Gelf50,Kuch82,Skri87,Kuch93}, one shows that the spectrum of the operator in the periodic domain has a band-gap structure, the bands being generated by the eigenvalues of a spectral problem set on the periodicity cell with quasi-periodic boundary conditions involving the Floquet-Bloch parameter. The first step consists in applying techniques of dimension reduction to derive a 1D model for this spectral problem on the periodicity cell. Then one studies precisely this 1D model depending on the Floquet-Bloch parameter to get information on the spectral bands.\\
\newline
Let us mention that in the past, slapdash and casual conclusions have been made concerning the 1D model problem. This model consists of ordinary differential equations on the ligaments obtained when taking $\eps\to0$ supplemented by transmission conditions at the nodes of the graph. Certain authors have inappropriately applied L. Pauling's model \cite{Paul36} and imposed Kirchoff transmission conditions at the nodes. These Kirchoff conditions boil down to impose continuity of the field and zero outgoing flux (the sum of the derivatives of the field along the outgoing directions at the node vanishes). This is correct for the Laplacian with Neumann boundary conditions (BC) and has been rigorously justified in \cite{KuZe03,ExPo05,Post12}. However it has been shown by D. Grieser in \cite{Grie08} (see also \cite{MoVa07}) that for the Dirichlet problem, in general the right conditions to impose at the nodes are Dirichlet ones. More precisely, it has been proved in \cite{Grie08} that the transmission conditions to impose depend on the existence or absence of so-called threshold resonances for the near field operator defined as the Laplacian in the geometry obtained when zooming at the junction regions (denoted $\Om$ in the sequel, see Figure \ref{PeriodicLattice} right). We say that there is a threshold resonance if there is a non zero bounded function which solves the homogeneous problem at the frequency coinciding with the bottom of the essential spectrum (the threshold) of the Laplace operator. For the Neumann problem, the threshold is $\Lambda_\dagger=0$ and  there is a threshold resonance because the constants solve $-\Delta u=0$ in $\Om$ + Neumann BC. Due to this property, one must impose Kirchoff conditions at the nodes. For the Dirichlet problem, the continuous spectrum starts at a positive threshold $\Lambda_\dagger>0$ and in general the only solution to the problem 
\begin{equation}\label{PbThreshold}
\begin{array}{|rcll}
-\Delta u&=&\Lambda_\dagger u &\mbox{ in } \Om\\[3pt]
u&=&0&\mbox{ on } \partial\Om
\end{array}
\end{equation}
which remains bounded at infinity is zero, i.e. there is no threshold resonance. Because of this feature, one should impose Dirichlet condition at the nodes of the 1D model (see (\ref{farfield_BC}) for the precise moment where this pops up in the analysis below).

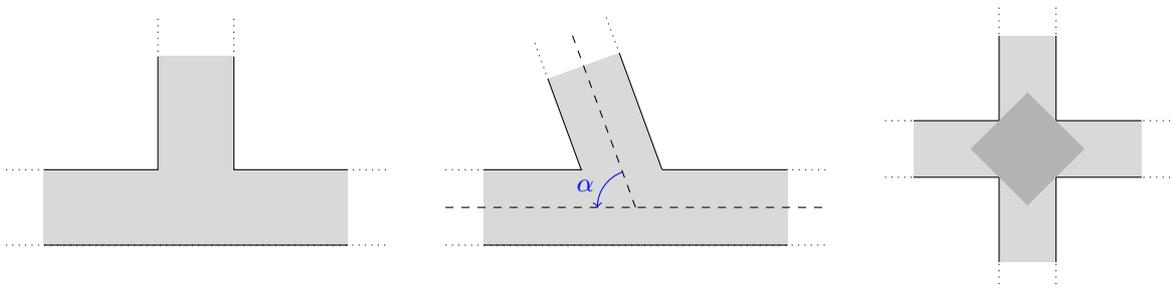
\begin{figure}[!ht]
\centering
\begin{tikzpicture}[scale=1]
\draw[fill=gray!30,draw=none](-2,-1/2) rectangle (2,1/2);
\draw[fill=gray!30,draw=none](-1/2,-1/2) rectangle (1/2,2);
\draw[black] (-2,-1/2)--(2,-1/2);
\draw[black] (-2,1/2)--(-1/2,1/2)--(-1/2,2);
\draw[black] (2,1/2)--(1/2,1/2)--(1/2,2);
\draw[black,dotted] (-2.5,-1/2)--(2.5,-1/2);
\draw[black,dotted] (-2.5,1/2)--(-2,1/2);
\draw[black,dotted] (2.5,1/2)--(2,1/2);
\draw[black,dotted] (-1/2,2.5)--(-1/2,2);
\draw[black,dotted] (1/2,2.5)--(1/2,2);
\end{tikzpicture}\qquad\begin{tikzpicture}[scale=1]
\begin{scope}[rotate=20]
\draw[fill=gray!30,draw=none](-1/2,0) rectangle (1/2,2);
\draw[black] (-1/2,1/2)--(-1/2,2);
\draw[black] (1/2,0)--(1/2,2);
\draw[black,dotted] (-1/2,2.5)--(-1/2,2);
\draw[black,dotted] (1/2,2.5)--(1/2,2);
\end{scope}
\draw[fill=gray!30,draw=none](-2,-1/2) rectangle (2,1/2);
\begin{scope}[rotate=20]
\draw[black,dashed] (0,0)--(0,2.5);
\end{scope}
\draw[black,dashed] (-2.5,0)--(2.5,0);
\draw[black] (-2,-1/2)--(2,-1/2);
\draw[black] (-2,1/2)--(-0.7,1/2);
\draw[black] (2,1/2)--(0.35,1/2);
\draw[black,dotted] (-2.5,-1/2)--(2.5,-1/2);
\draw[black,dotted] (-2.5,1/2)--(-2,1/2);
\draw[black,dotted] (2.5,1/2)--(2,1/2);
\pgfmathparse{.5*cos(110)}\let\x\pgfmathresult
\pgfmathparse{.5*sin(110)}\let\y\pgfmathresult
\draw[blue,->] (\x,\y) arc (110:180:.5cm);
\node[left,blue] at (145:.5) {\small $\alpha$};
\end{tikzpicture}\qquad\raisebox{-0.6cm}{\begin{tikzpicture}[scale=0.75]
\draw[fill=gray!30,draw=none](-2,-1/2) rectangle (2,1/2);
\draw[fill=gray!30,draw=none](-1/2,-2) rectangle (1/2,2);
\draw[black] (-2,-1/2)--(-1/2,-1/2)--(-1/2,-2);
\draw[black] (2,-1/2)--(1/2,-1/2)--(1/2,-2);
\draw[black] (-2,1/2)--(-1/2,1/2)--(-1/2,2);
\draw[black] (2,1/2)--(1/2,1/2)--(1/2,2);
\draw[black,dotted] (2.5,-1/2)--(2,-1/2);
\draw[black,dotted] (-2.5,-1/2)--(-2,-1/2);
\draw[black,dotted] (-2.5,1/2)--(-2,1/2);
\draw[black,dotted] (2.5,1/2)--(2,1/2);
\draw[black,dotted] (-1/2,2.5)--(-1/2,2);
\draw[black,dotted] (1/2,2.5)--(1/2,2);
\draw[black,dotted] (-1/2,-2.5)--(-1/2,-2);
\draw[black,dotted] (1/2,-2.5)--(1/2,-2);
\begin{scope}[rotate=45]
\draw[fill=gray!60,draw=none](-0.707,-0.707) rectangle (0.707,0.707);
\end{scope}
\end{tikzpicture}}
\caption{Examples of T- and X-shaped 2D geometries.\label{2Dexamples}} 
\end{figure}

\noindent From there, authors have worked to establish rigorous results showing the absence of non zero bounded solution to (\ref{PbThreshold}). First, different planar
quantum waveguides made of T-, X- and Y-shaped junctions of thin ligaments have been considered in \cite{Naza10a,Naza14,Naza14a,NaRU14,NaRU15,Naza17,Pank17}. In these articles, additionally it has been proved that the near field operator in $\Om$, depending on the considered geometry, may have discrete spectrum (one or several eigenvalue below the continuous spectrum). When the latter exists, the low-frequency range of the spectrum in the periodic domain is not described by the above mentioned 1D model with Dirichlet conditions at the nodes. Instead, the first spectral bands, their number being equal to the multiplicity of the discrete spectrum, are generated by functions which are localized at the junctions regions. Let us mention that for certain exceptional geometries, for example for a sequence of angles $\alpha$ in the central domain of Figure \ref{2Dexamples}, one may have non zero solutions to (\ref{PbThreshold}) which remain bounded at infinity, i.e. existence of threshold resonance. In these situations, at least if the dimension of the space of bounded solutions to (\ref{PbThreshold}) is one, the good 1D model describing the spectral bands which are not associated with the discrete spectrum in $\Om$, has certain Kirchoff transmission conditions at the nodes which depend on the geometry. We emphasize that this leads to very different spectra for the operator in the periodic medium. More precisely, when there is no threshold resonance, the bands of the spectrum in the periodic material become very small as $\eps\to0$ and the spectral gaps enlarge. In other words, the spectrum becomes rather sparse, for most of the spectral parameters waves cannot propagate and the limit 1D ligaments are somehow disconnected. This is what we will obtain below in our configuration. On the other hand, when Problem (\ref{PbThreshold}) admits a space of dimension one of bounded solutions, the spectral bands in the periodic material are much larger.\\
\newline
Afterwards, 3D geometries were considered in \cite{BaMaNa17} (see also the corresponding note \cite{BaMaNa15}). In these articles, the authors consider quantum waveguides for which the near field domain $\Om$ is a cruciform junction of two cylinders whose cross section coincides with the unit disc. The passage from the planar case to the spatial case requires the non obvious adaptation of the methods. In particular the characterization of the discrete spectrum of the near field operator and the proof of absence of threshold resonances are much more involved. 
In the present work, we study an even more intricate 3D geometry for which the near field geometry is the union of three waveguides. Rather precise Friedrichs estimates are required to prove that the discrete spectrum of the near field operator contains exactly one eigenvalue and to show that at the threshold, zero is the only bounded solution. This work also complements the study of \cite{BaMaNa17} thanks to the numerical experiments. Let us mention that the cruciform junction of two 3D cylinders with square cross section reduces to a 2D problem in a X-shaped geometry (see again Figure \ref{2Dexamples} right) and due to factoring out, is of no interest.\\
\newline
The outline is as follows. First we describe the problem, introduce the notation and present the main results. Then we study the discrete spectrum of the near field operator. In section \ref{sectionAbsenceofBoundedSol}, we demonstrate the absence of threshold resonance for the near field operator. Section \ref{SectionModels} is dedicated to the analysis of the main theorem of the article (Theorem \ref{MainThmPerio}) with the derivation of asymptotic models for the spectral bands in the original periodic domain. Finally we show some numerics to complement the results and conclude with some appendix containing the proof of two lemmas needed in sections \ref{Section_DiscreteSpectrum}, \ref{sectionAbsenceofBoundedSol}. 

\section{Notation and main results}

For $j=1,2,3$, introduce the cylinder with square cross section
\begin{equation}\label{defBars}
L_j  :=  \{x:=(x_1,x_2,x_2)\in\R^3\,|\,|x_k|<1/2\mbox{ for }k\in\{1,2,3\}\setminus\{j\}\}
\end{equation}
and for $\eps>0$ small, $m,n\in\Z:=\{0,\pm1,\pm2,\dots\}$, set
\[
\begin{array}{rcl}
L^{mn\eps}_1 & := & \{x\in\R^3\,|\,|x_2-m|<\eps/2,\,|x_3-n|<\eps/2\}\\[2pt]
L^{mn\eps}_2 & := & \{x\in\R^3\,|\,|x_1-m|<\eps/2,\,|x_3-n|<\eps/2\}\\[2pt]
L^{mn\eps}_3 & := & \{x\in\R^3\,|\,|x_1-m|<\eps/2,\,|x_2-n|<\eps/2\}.
\end{array}
\]
Finally define the unbounded periodic domain 
\[
\Pi^\eps:=\bigcup_{m,n\in\Z}L^{mn\eps}_1\cup L^{mn\eps}_2\cup L^{mn\eps}_3
\]
(see Figure \ref{PeriodicLattice} left). Consider the Dirichlet spectral problem for the Laplace operator 
\begin{equation}\label{PbSpectral}
\begin{array}{|rcll}
-\Delta u^\eps&=&\lambda^\eps\,u^\eps &\mbox{ in }\Pi^\eps\\[3pt]
u^\eps&=&0&\mbox{ on } \partial\Pi^\eps.
\end{array}
\end{equation}
The variational form associated with this problem writes
\begin{equation}\label{variationPb}
\int_{\Pi^\eps}\nabla u^\eps\cdot\nabla v^\eps\,dx=\lambda^\eps\int_{\Pi^\eps} u^\eps v^\eps\,dx,\qquad\forall v^\eps\in\mH^1_0(\Pi^\eps).
\end{equation}
Here $\mH^1_0(\Pi^\eps)$ stands for the Sobolev space of functions of $\mH^1(\Pi^\eps)$ which vanish on the boundary $\partial\Pi^\eps$. Classically (see e.g \cite[\S10.1]{BiSo87}), the variational problem (\ref{variationPb}) gives rise to an unbounded, positive definite, selfadjoint operator $A^\eps$ in the Hilbert space $\mL^2(\Pi^\eps)$, with domain $\mathcal{D}(A^\eps)\subset\mH^1_0(\Pi^\eps)$. Note that this operator is sometimes called the quantum graph Laplacian \cite{BeKu13,Post12}. Since $\Pi^\eps$ is unbounded, the embedding $\mH^1_0(\Pi^\eps)\subset \mL^2(\Pi^\eps)$ is not compact and $A^\eps$ has a non empty essential component $\sigma_e(A^\eps)$ (\cite[Thm. 10.1.5]{BiSo87}). Actually, due to the periodicity, we have $\sigma_e(A^\eps)=\sigma(A^\eps)$. The Gelfand transform (see the surveys \cite{Kuch82,Naza99a} and books \cite{Skri87,Kuch93}) 
\[
u^{\eps}(x)\mapsto U^\eps(x,\eta)=\cfrac{1}{(2\pi)^{3/2}}\sum_{\iota\in\Z^3} e^{i\eta\cdot \iota}u^{\eps}(x+\iota),\quad\eta=(\eta_1,\eta_2,\eta_3),
\]
changes Problem (\ref{PbSpectral}) into the following spectral problem with quasi-periodic at the faces located at $x_1=\pm1/2$, $x_2=\pm1/2$, $x_3=\pm1/2$,
\begin{equation}\label{PbSpectralCell}
\begin{array}{|rcll}
-\Delta U^\eps(x,\eta)&=&\Lambda^\eps\,U^\eps(x,\eta) & x\in\om^\eps\\[3pt]
U^\eps(x,\eta)&=&0&x\in\partial\om^\eps\cap\partial\Pi^\eps\\[3pt]
U^\eps(-1/2,x_2,x_3,\eta) & = & e^{i\eta_1}U^\eps(+1/2,x_2,x_3,\eta) & (x_2,x_3)\in(-\eps/2;\eps/2)^2\\[3pt]
\partial_{x_1}U^\eps(-1/2,x_2,x_3,\eta) & = & e^{i\eta_1}\partial_{x_1}U^\eps(+1/2,x_2,x_3,\eta) & (x_2,x_3)\in(-\eps/2;\eps/2)^2\\[3pt]
U^\eps(x_1,-1/2,x_3,\eta) & = & e^{i\eta_2}U^\eps(x_1,+1/2,x_3,\eta) & (x_1,x_3)\in(-\eps/2;\eps/2)^2\\[3pt]
\partial_{x_2}U^\eps(x_1,-1/2,x_3,\eta) & = & e^{i\eta_2}\partial_{x_2}U^\eps(x_1,+1/2,x_3,\eta) & (x_1,x_3)\in(-\eps/2;\eps/2)^2\\[3pt]
U^\eps(x_1,x_2,-1/2,\eta) & = & e^{i\eta_3}U^\eps(x_1,x_2,+1/2,\eta) & (x_1,x_2)\in(-\eps/2;\eps/2)^2\\[3pt]
\partial_{x_3}U^\eps(x_1,x_2,-1/2,\eta) & = & e^{i\eta_3}\partial_{x_3}U^\eps(x_1,x_2,+1/2,\eta) & (x_1,x_2)\in(-\eps/2;\eps/2)^2,
\end{array}
\end{equation}
set in the periodicity cell 
\[
\om^\eps:=\om^\eps_1\cup\om^\eps_2\cup\om^\eps_3\quad\mbox{ with }\quad\begin{array}{|l}
\om^\eps_1:=(-1/2;1/2)\times(-\eps/2;\eps/2)^2\\[2pt]
\om^\eps_2:=(-\eps/2;\eps/2)\times(-1/2;1/2)\times(-\eps/2;\eps/2)\\[2pt]
\om^\eps_3:=(-\eps/2;\eps/2)^2\times(-1/2;1/2).
\end{array}
\]
Problem (\ref{PbSpectralCell}) is formally selfadjoint for any value of the dual variable $\eta\in\R^3$. Additionally it is $2\pi$-periodic with respect to each of the $\eta_j$ because the transformation $\eta_j\mapsto\eta_j+2\pi$ leaves invariant the quasiperiodicity conditions. For any $\eta\in[0;2\pi)^3$, the spectrum of (\ref{PbSpectralCell}) is discrete, made of a monotone increasing positive sequence of eigenvalues
\[
0<\Lambda_1^\eps(\eta)\le \Lambda_2^\eps(\eta)\le \dots \le \Lambda_p^\eps(\eta)\le \dots
\]
where the $\Lambda_p^\eps(\eta)$ are counted according to their multiplicity. The functions 
\[
\eta\mapsto \Lambda_p^\eps(\eta)
\]
are continuous (\cite[Chap. 9]{Kato95}) so that the sets 
\begin{equation}\label{SpectralBands}
\Upsilon^\eps_p=\{\Lambda_p^\eps(\eta),\,\eta\in[0;2\pi)^3\}
\end{equation}
are connected compact segments. Finally, according to the theory (see again \cite{Gelf50,Kuch82,Skri87,Naza99a,Kuch93}), the spectrum of the operator $A^{\eps}$ has the form
\[
\sigma(A^\eps)=\bigcup_{p\in\N^\ast}\Upsilon^\eps_p
\]
where $\N^\ast:=\{1,2,\dots\}$. At this stage, we see that to clarify the behaviour of the spectrum of $A^\eps$ with respect to $\eps\to0^+$, we need to study the dependence of the $\Upsilon^\eps_p$ with respect to $\eps$.\\
\newline
As already mentioned in the introduction, the analysis developed for example in \cite{Grie08,Naza05,Naza14} shows that the asymptotic behaviour of the $\Upsilon^\eps_p$ with respect to $\eps$ depends on the features of the Dirichlet Laplacian in the geometry obtained when zooming at the junction region of the periodicity cell $\om^\eps$. More precisely, introduce the unbounded domain
\begin{equation}\label{DefOmega}
\Om:=L_1\cup L_2\cup L_3
\end{equation}
(see Figure \ref{PeriodicLattice} right) where the $L_j$ are the cylinders with unit square cross section appearing in (\ref{defBars}). In $\Om$, consider the Dirichlet spectral problem for the Laplace operator 
\begin{equation}\label{PbSpectralZoom}
\begin{array}{|rcll}
-\Delta v&=&\mu\,v &\mbox{ in }\Om\\[3pt]
v&=&0&\mbox{ on } \partial\Om
\end{array}
\end{equation}
which is now independent of $\eps$. We denote by $A^\Om$ the unbounded, positive definite, selfadjoint operator naturally associated with this problem defined in $\mL^2(\Om)$ and of domain $\mathcal{D}(A^\Om)\subset\mH^1_0(\Om)$. Its continuous spectrum $\sigma_c(A^\Om)$ occupies the ray $[2\pi^2;+\infty)$ and the threshold point $\Lambda_\dagger:=2\pi^2$ is the first eigenvalue of the Dirichlet problem in the cross sections of the branches of $\Om$ (which are unit squares). The main goal of this article is to show the following results.
\begin{theorem}\label{ThmDiscreteSpectrum}
The discrete spectrum of the operator $A^{\Om}$ contains exactly one eigenvalue $\mu_1\in(0;2\pi^2)$.
\end{theorem}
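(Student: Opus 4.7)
The plan is to split the proof of Theorem~\ref{ThmDiscreteSpectrum} into the existence of an eigenvalue in $(0;2\pi^2)$ and the uniqueness of such an eigenvalue, the cubic symmetries of $\Om$ playing a central role throughout.

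For existence, the min-max principle reduces matters to exhibiting one $u \in \mH^1_0(\Om)\setminus\{0\}$ whose Rayleigh quotient $\|\nabla u\|^2_{\mL^2(\Om)}/\|u\|^2_{\mL^2(\Om)}$ lies strictly below $2\pi^2$. I would build such a trial function from the first transverse Dirichlet mode $\phi_0(y,z):=2\sin(\pi(y+1/2))\sin(\pi(z+1/2))$, which exactly saturates the threshold on each branch $L_j$. Multiplying $\phi_0$ in the two transverse variables by a compactly supported longitudinal profile along each of the three axes and summing these three contributions symmetrically produces a candidate which overlaps constructively inside the central cube $Q:=(-1/2;1/2)^3$, picking up extra $\mL^2$ mass essentially for free while only paying some longitudinal Dirichlet energy. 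Tuning the profile (with, if needed, a small correction cancelling the leading mismatch on the internal faces where $L_j$ meets $L_k$) pushes the quotient strictly below $2\pi^2$, in the spirit of the trapped-mode constructions of \cite{Naza10a,Naza14,BaMaNa17}.

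For uniqueness, I would exploit the full symmetry group $G$ of $\Om$ (of order $48$), generated by the three reflections $x_k \mapsto -x_k$ and the coordinate permutations. Since $A^\Om$ commutes with every $g \in G$, its domain splits orthogonally into $G$-isotypic components and the discrete spectrum splits accordingly. Any ground state of $A^\Om$ is positive, totally symmetric, and hence lives in the trivial isotype $V_1$. It is therefore enough to prove: (a) on each non-trivial isotype one has the Friedrichs-type inequality $\|\nabla u\|^2 \geq 2\pi^2 \|u\|^2$, so that no discrete eigenvalue is present there; and (b) inside $V_1$ itself there is at most one eigenvalue strictly below $2\pi^2$. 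For (a) I would first refine the decomposition via the abelian subgroup $\Z_2^3$ of sign changes: a non-trivial parity forces a Dirichlet trace on one of the planes $\{x_k=0\}$, which raises the effective transverse Dirichlet eigenvalue on the affected branch above $2\pi^2$; the remaining symmetric-parity but non-trivially-permuted components (sign and standard $S_3$ representations inside the $(+,+,+)$ parity sector) are handled using the additional vanishing on the diagonal planes $\{x_i=x_j\}$ that they impose. Separation of variables on each branch combined with a standard Friedrichs bound on the bounded junction $Q$ then delivers (a).

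The principal obstacle is step (b), the uniqueness inside the totally symmetric sector, where $\phi_0$ is allowed and the transverse threshold is exactly attained so that no transverse margin is available. The strategy is to decompose each $u \in V_1$ on each branch into its $\phi_0$-component $c(x_j)$ and a transversally orthogonal remainder, to absorb the remainder using the spectral gap $5\pi^2 - 2\pi^2$, and to reduce the longitudinal profile $c$ to a one-dimensional spectral problem on the symmetric star graph formed by the three semi-infinite rays glued at $Q$. A sharp, one-sided Friedrichs inequality on this reduced geometry — in which the finite junction $Q$ contributes a rank-one defect — is exactly the ``well-chosen Friedrichs inequality'' announced in the abstract; securing it with the correct constant $2\pi^2$ and a one-dimensional deficiency, so that exactly one eigenvalue falls below the threshold, is the analytic core of the proof.
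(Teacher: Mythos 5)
Your existence argument is sound, and it is genuinely different from the paper's: the symmetric sum of the three quasimodes $U_\dagger(y_j)\chi(z_j)$ does work, because in the central cube the gradient cross-coupling of two branches is strictly smaller than $2\pi^2$ times their $\mL^2$ cross-coupling, so a long, slowly decaying longitudinal profile $\chi$ pushes the Rayleigh quotient below $2\pi^2$ without any further correction. The paper instead imports the known bound state $\varphi$ of the 2D X-junction and tests with $\varphi(x_1,x_2)\cos(\pi x_3)$ supported in $L_1\cup L_2$ (Proposition \ref{PropoAtLeastOne}), which gives an eigenvalue below $\mu^{2D}+\pi^2$ with no computation; either route is acceptable.

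The genuine gap is in the uniqueness half. In your step (a), the statement that an odd parity in $x_k$ ``raises the effective transverse Dirichlet eigenvalue on the affected branch above $2\pi^2$'' is only true for the two branches for which $x_k$ is a transverse variable; on the branch parallel to $x_k$ and, crucially, on the junction half-cube $Q^+_1=(0;1/2)\times(-1/2;1/2)^2$, the only extra information is the Dirichlet trace on $\{x_k=0\}$, and the ``standard Friedrichs bound on the bounded junction'' you invoke gives only the constant $\pi^2$, which is not enough. Closing exactly this deficit is the content of the paper's Lemma \ref{LemmaSym}: one must trade the surplus $5\pi^2-2\pi^2$ available in the thin overlap regions $S_2^\pm,S_3^\pm$ against the junction, via the sharpened inequality of Lemma \ref{LemmaPoincareFriedrichs} with $a=\pi\sqrt{5/2}$ and $\kappa(a)>\pi^2/2$ — this is the ``well-chosen Friedrichs inequality'' of the abstract, and it belongs to your step (a), not (b). Moreover, in the all-even sector the components transforming in the standard representation of $S_3$ need not vanish on any diagonal plane $\{x_i=x_j\}$ (only the sign representation does), so that part of (a) does not go through as stated, and would in any case face the same junction difficulty. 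Finally, your step (b) — at most one eigenvalue in the totally symmetric sector — is exactly what you leave unproven, and the star-graph reduction you sketch is ambiguous in the cube where the three branches overlap. The paper needs neither the permutation symmetry nor a graph model here: after Lemma \ref{LemmaSym} it restricts to the octant $\Om^+$ with Neumann conditions on the symmetry planes and applies the max-min principle for the second eigenvalue with the single constraint $\int_{Q^+}w\,dx=0$, $Q^+=(0;1/2)^3$, combining the transverse Poincar\'e constant $2\pi^2$ on $\Om^+\setminus\overline{Q^+}$ with the first nonzero Neumann eigenvalue $4\pi^2$ of $Q^+$ to reach a contradiction. As written, your proposal therefore does not yet establish uniqueness.
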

\begin{remark}
Numerically, in Section \ref{SectionNumerics}, we find $\mu_1\approx 12.9\approx1.3\pi^2$. Note that the eigenfunctions associated with $\mu_1$ decay at infinity as $O(-\sqrt{2\pi^2-\mu_1}\,|x_j|)$ in $L_j$, $j=1,2,3$.
\end{remark}
\noindent As classical in literature, we shall say that there is a threshold resonance for Problem (\ref{PbSpectralZoom}) if there is non trivial function which solves (\ref{PbSpectralZoom}) with the threshold value $\mu=2\pi^2$ of the spectral parameter.
\begin{theorem}\label{ThmNoBoundedSol}
There is no threshold resonance for Problem (\ref{PbSpectralZoom}). 
\end{theorem}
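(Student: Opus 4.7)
The plan is to show that any bounded solution $u$ of (\ref{PbSpectralZoom}) with $\mu=2\pi^2$ vanishes, by combining a far-field expansion of $u$ in the three branches of $\Om$, a decomposition by the symmetry group of $\Om$, and Friedrichs-type inequalities tailored to each symmetry class. First, I would expand $u$ in each cylinder $L_j$ in the Dirichlet eigenbasis of the unit-square cross section. Since $2\pi^2$ is the lowest transverse eigenvalue with eigenfunction $\phi_0(x_k,x_l)=2\sin(\pi(x_k+1/2))\sin(\pi(x_l+1/2))$ and the next one is $5\pi^2$, boundedness forces
\[
u(x)=a_j^{\pm}\,\phi_0(x_k,x_l)+O(e^{-\sqrt{3}\pi|x_j|})\quad\mbox{as }x_j\to\pm\infty\mbox{ in }L_j,\quad\{k,l\}=\{1,2,3\}\setminus\{j\}.
\]
Everything reduces to showing that the six constants $a_j^{\pm}$ all vanish: once they do, $u$ decays exponentially in every end, and integrating $-\Delta u=2\pi^2u$ against $u$ combined with the branchwise Friedrichs inequality $\int_{L_j}|\nabla u|^2\geq 2\pi^2\int_{L_j}|u|^2$ (saturated only by $u\propto\phi_0$, which is not square-integrable along the branch) forces $u$ to vanish on each branch; the remaining piece is a Dirichlet eigenfunction of the central unit cube at $2\pi^2<3\pi^2=\lambda_1((-1/2,1/2)^3)$, hence zero.

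To extract the $a_j^\pm$ I would decompose $u$ into isotypic components under the symmetry group $G$ of $\Om$ (generated by the three reflections $x_j\mapsto-x_j$ and the permutations of the coordinates), thereby reducing to an equivalent problem on a fundamental subdomain of $\Om$ with mixed Dirichlet/Neumann conditions on the symmetry planes, one set of conditions per irreducible representation. The key observation is that $\phi_0$ is symmetric under both reflections of the cross-sectional square, so it contributes only to components which, restricted to each branch, are invariant under the two transverse reflections. Any component breaking such a transverse reflection therefore has transverse spectrum starting at $5\pi^2$, giving a strict Friedrichs gap $\int|\nabla v|^2\geq 5\pi^2\int|v|^2$ in the far reaches of each branch; this forces exponential decay in all branches and, via the argument above, $v\equiv 0$ on such components.

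The genuine obstacle is the totally $G$-invariant component, for which the six constants coincide, $a_j^{\pm}=c$, and $\phi_0$ exactly saturates the transverse Friedrichs inequality, so the gap used above is lost. My plan here is to combine max-min arguments with a sharpened Friedrichs inequality on the $G$-symmetric subspace --- the one the abstract alludes to --- built on the fundamental subdomain and exploiting both the extra $\mL^2$-mass supplied by the central cube and the symmetry constraints forced on the traces across cross sections cutting each branch. Applied to a suitable truncation $u-c\chi_R\phi_0$, with $\chi_R$ a smooth cutoff supported in the branches beyond $|x_j|>R$, such an inequality should isolate, as $R\to\infty$, a boundary contribution proportional to $c^2$ that is incompatible with the finiteness of the gradient energy unless $c=0$; the previous paragraph then finishes the argument. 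Designing this sharpened Friedrichs estimate on the three-bar junction --- substantially more delicate than the two-cylinder geometry treated in \cite{BaMaNa17}, since one must simultaneously accommodate three perpendicular waveguide directions meeting at a cube --- is the step I expect to be the main obstacle.
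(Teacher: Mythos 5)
Your proposal has a genuine gap at exactly the point where the whole difficulty of Theorem \ref{ThmNoBoundedSol} sits. First, the intermediate symmetry classes are mishandled: a component that is antisymmetric under, say, $x_1\mapsto-x_1$ but symmetric under the other two reflections breaks a \emph{transverse} reflection only in the branches $L_2,L_3$; in $L_1$ the reflection is axial, the transverse mode $U_\dagger$ is still present there, and your $5\pi^2$-gap argument gives no decay in $L_1$ (such a component may a priori behave like $a_1^\pm U_\dagger$ with $a_1^-=-a_1^+\neq0$). Killing precisely these components is what the paper's symmetry lemmas do, and it is not soft: it needs the refined Friedrichs inequality of Lemma \ref{LemmaPoincareFriedrichsR} with the computation $\kappa(\pi\sqrt{5/2})>\pi^2/2$, because on the junction cube the naive 1D Poincar\'e inequality only yields the constant $\pi^2$, well short of $2\pi^2$. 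Second, your reduction ``once the $a_j^\pm$ vanish, $u\equiv0$'' is not a proof: the branches $L_j$ overlap in the central cube, $u$ does not satisfy Dirichlet conditions on the parts of $\partial L_j$ interior to $\Om$, and the global form inequality $\int_\Om|\nabla u|^2\ge2\pi^2\int_\Om u^2$ is in fact false on $\mH^1_0(\Om)$ (there is an eigenvalue $\mu_1<2\pi^2$), so excluding an embedded eigenvalue at exactly $2\pi^2$ requires a strict inequality on an appropriate subspace, which your saturated branchwise estimate does not supply. Third, and most importantly, the fully symmetric class with all $a_j^\pm=c$ — the actual crux — is left as an acknowledged ``main obstacle'': the sharpened Friedrichs inequality and the cutoff argument producing a contradiction proportional to $c^2$ are only conjectured, not constructed.

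The paper avoids handling the far-field constants directly. It invokes the criterion of \cite{Pank17} (see also \cite{BaNa21}): absence of threshold resonance follows if, for some $R$, the \emph{second} eigenvalue $\mu_2^R$ of the mixed problem (\ref{SpectralRMixed}) on the truncated domain $\Om^R$ (Dirichlet on $\partial\Om$, Neumann on the artificial cuts) satisfies $\mu_2^R>2\pi^2$. This converts the question into an eigenvalue comparison on a bounded domain, which is then settled by the symmetry result (Lemma \ref{LemmaSymR}, proved with the strict margin $\delta>0$ and unique continuation so that the case $\mu=2\pi^2$ is covered) together with a max-min argument on the symmetric quarter domain using the Neumann cube eigenvalue $4\pi^2$ (Proposition \ref{PropoBoundedSecond}). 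If you want to salvage a direct approach along your lines, you would essentially have to reprove such a criterion or construct the sharpened junction inequality you postulate; as written, the proposal does not establish the theorem.
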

\noindent From Theorems \ref{ThmDiscreteSpectrum} and \ref{ThmNoBoundedSol}, we will derive the final result for the spectrum of $A^\eps$ in the initial unbounded periodic lattice:
\begin{theorem}\label{MainThmPerio}
There are positive values $\eps_p>0$ and $c_p>0$ such that for the spectral bands introduced in (\ref{SpectralBands}), we have the estimates
\[
\begin{array}{ll}
\Upsilon^\eps_1\subset \eps^{-2}\mu_1+\eps^{-2}e^{-\sqrt{2\pi^2-\mu_1}/\eps}(-c_1;c_1),&\  \eps\in(0;\eps_1]\\[8pt]
\Upsilon^\eps_{1+q+3(p-1)}\subset \eps^{-2}2\pi^2+p^2\pi^2+\eps\,(-c_p;c_p),&\  \eps\in(0;\eps_p],\  q=1,2,3,\  p\in\N^\ast.\\
\end{array}
\]
\end{theorem}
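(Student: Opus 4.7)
The plan is to analyse each spectral band $\Upsilon^\eps_p$ via the Floquet-Bloch eigenvalues $\Lambda^\eps_p(\eta)$ of (\ref{PbSpectralCell}), and to combine matched asymptotic expansions in the thin ligaments with the near-field results of Theorems \ref{ThmDiscreteSpectrum} and \ref{ThmNoBoundedSol}. Rescaling $y = x/\eps$ turns the cell problem near the junction into (\ref{PbSpectralZoom}) on $\Om$, so that eigenvalues of $A^\eps$ of order $\eps^{-2}$ correspond to eigenvalues of order $1$ of $A^\Om$; in particular the threshold $\Lambda_\dagger = 2\pi^2$ for $A^\Om$ becomes $\eps^{-2}2\pi^2$ for $A^\eps$, and the asymptotics below and above this value are driven by different mechanisms.

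\textbf{First band.} For $\Upsilon^\eps_1$ I would build a quasi-mode from a normalised eigenfunction $v_1$ associated with the unique discrete-spectrum eigenvalue $\mu_1$ of $A^\Om$. Since $\mu_1 < 2\pi^2$, separation of variables in each arm gives the pointwise bound $|v_1(y)| \leq C\exp(-\sqrt{2\pi^2-\mu_1}\,|y_j|)$ as $|y_j| \to \infty$ in $L_j$. Setting $U^\eps_{\mrm{qm}}(x,\eta) = \chi(x)\,v_1(x/\eps)$ with a smooth cut-off $\chi$ supported away from the cell faces, one obtains an approximate Floquet-Bloch mode whose residual is exponentially small in $\eps$, the sharp exponent arising from overlap integrals between $v_1$ and its lattice translates. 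A Vishik-Lyusternik near-eigenvalue lemma then produces $\Lambda^\eps_k(\eta)$ with $|\Lambda^\eps_k(\eta) - \eps^{-2}\mu_1| \leq c_1\,\eps^{-2}\exp(-\sqrt{2\pi^2-\mu_1}/\eps)$ uniformly in $\eta$, while a max-min comparison with $A^\Om$ (whose spectrum below threshold is $\{\mu_1\}$) prevents any other $\Lambda^\eps_j(\eta)$ from entering the same window and identifies the produced eigenvalue as $\Lambda^\eps_1(\eta)$. Taking the union over $\eta$ yields the bound on $\Upsilon^\eps_1$.

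\textbf{Higher bands.} Above the threshold I would use a two-scale matched expansion. In each ligament $\om^\eps_j$, far from the junction, take the outer ansatz $U^\eps(x,\eta) \approx \phi^\eps_j(x)\,w_j(x_j,\eta)$, where $\phi^\eps_j$ is the first transverse Dirichlet eigenmode on the $\eps\times\eps$ square cross-section (contributing the eigenvalue $2\pi^2/\eps^2$) and $w_j$ depends only on the longitudinal variable $x_j$ and solves $-w_j'' = \Lambda\,w_j$ on $(-1/2,1/2)$ with $\eta_j$-quasi-periodic conditions at $x_j = \pm 1/2$. Near the junction, rescale $y = x/\eps$; the leading inner problem is (\ref{PbSpectralZoom}) at $\mu = 2\pi^2$, and matching forces a bounded solution on $\Om$ whose asymptotics in arm $L_j$ are prescribed by $w_j(0,\eta)$. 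By Theorem \ref{ThmNoBoundedSol}, the only such solution is zero, so $w_j(0,\eta) = 0$ for $j = 1,2,3$: Dirichlet conditions at the node. The limit 1D problem thus decouples into three independent Dirichlet segments of unit length, whose eigenvalues are $p^2\pi^2$ with multiplicity three. Constructing the quasi-modes in full (outer profile $w_j$, inner corrector, boundary layers at the cell faces) and invoking a Kato-type abstract convergence theorem, together with the Friedrichs-type estimates established in Sections \ref{Section_DiscreteSpectrum} and \ref{sectionAbsenceofBoundedSol}, gives $\Lambda^\eps_{1+q+3(p-1)}(\eta) = \eps^{-2}2\pi^2 + p^2\pi^2 + O(\eps)$ uniformly in $\eta$, from which the second inclusion follows.

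\textbf{Main obstacle.} The delicate step is the matching above the threshold. One must rigorously construct an inner corrector decaying at infinity, which is possible only because Theorem \ref{ThmNoBoundedSol} rules out any non-zero bounded solution of (\ref{PbSpectralZoom}) at $\mu = \Lambda_\dagger$: this absence of threshold resonance is exactly what selects Dirichlet (rather than Kirchhoff-type) conditions at the nodes of the 1D graph and enables quantitative, $\eps$- and $\eta$-uniform control of the remainder in a geometry where the transverse spectral gap collapses as $\eps \to 0$. The quantitative cornerstone is the Friedrichs inequality built in the course of Theorems \ref{ThmDiscreteSpectrum}-\ref{ThmNoBoundedSol}.
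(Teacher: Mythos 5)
Your treatment of the higher bands is essentially the paper's: outer ansatz $\gamma_j^\pm(x_j)U_\dagger$ in the ligaments, inner problem at the threshold, and Theorem \ref{ThmNoBoundedSol} forcing the bounded inner field to vanish, hence Dirichlet conditions $\gamma_j^\pm(0)=0$ and the triple eigenvalue $p^2\pi^2$. Two remarks, though. First, the inner corrector you need at the next order is \emph{not} decaying at infinity: it is built from the special solutions $\tilde{W}_j$ of (\ref{NearFieldPb}) which grow linearly along the arms, and Theorem \ref{ThmNoBoundedSol} is used to solve for them with bounded remainders and to make the polarization matrix $\mathbb{M}$ well defined. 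Second, to pin down the constant $c_p$ in the inclusion one needs the $O(\eps)$ term to be bounded uniformly in $\eta$; the paper gets this by reducing the compatibility conditions to the hermitian $3\times3$ problem (\ref{CompactForm}) whose eigenvalues $\tilde{\nu}_j(\eta)$ range over the bounded set $\aleph$. An appeal to a generic Kato-type convergence theorem gives convergence but not by itself the rate $O(\eps)$ uniform in $\eta$; that said, the paper's own presentation is formal at this stage and defers the error estimates to \cite{Naza17,Naza18}, so your level of justification here is comparable.

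The genuine gap is in the first band. Your quasi-mode $\chi(x)v_1(x/\eps)$ has its discrepancy concentrated at the cell faces $|x_j|=1/2$, where $v_1(x/\eps)\sim K e^{-\beta_1/(2\eps)}U_\dagger$ with $\beta_1=\sqrt{2\pi^2-\mu_1}$. A Vishik--Lyusternik near-eigenvalue lemma applied to this one-term quasi-mode therefore localizes $\Lambda_1^\eps(\eta)$ only within $O(\eps^{-2}e^{-\beta_1/(2\eps)})$ of $\eps^{-2}\mu_1$, which is far weaker than the claimed window of width $\eps^{-2}e^{-\beta_1/\eps}=\eps^{-2}\bigl(e^{-\beta_1/(2\eps)}\bigr)^2$. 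The sharp exponent is a tunneling effect and requires computing the interaction explicitly, which is exactly what the paper does: it adds the corrector $e^{-\beta_1/\eps}V(x/\eps,\eta)$, allows $V$ to grow like $B_j e^{\beta_1 z_j}U_\dagger(y_j)$ in the branches, determines $B_j=K e^{\mp i\eta_j}$ from the quasi-periodicity conditions at the faces, and extracts $M(\eta)=-4\beta_1K^2\sum_{j=1}^3\cos\eta_j$ from the Fredholm compatibility condition, so that the $\eta$-dependent splitting appears at order $\eps^{-2}e^{-\beta_1/\eps}$. Your phrase about ``overlap integrals between $v_1$ and its lattice translates'' points at the right mechanism, but as written your argument (residual bound plus near-eigenvalue lemma) does not deliver the stated exponent; without the second term of the ansatz, or an equivalent interaction-matrix computation, the first inclusion of the theorem is not proved.
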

\noindent Let us comment this result. First, as already mentioned in the introduction, the spectrum of the operator $A^\eps$ goes to $+\infty$ as $\eps\to0$. Additionally this spectrum becomes very sparse. Indeed Theorem \ref{MainThmPerio} implies the following results. The length of the spectral bands are infinitesimal as $\eps\to0$.  Moreover, between the bands $\Upsilon^\eps_1$ and $\Upsilon^\eps_2$, there is a gap, that is a segment of spectral parameters $\Lambda$ such that waves cannot propagate, of size $O(\eps^{-2}(2\pi^2-\mu_1))$ while between $\Upsilon^\eps_{1+q+3(p-1)}$ and $\Upsilon^\eps_{1+q+3p}$, the gap is of width $O(\pi^2(2p-1))$. Therefore the main message here is that in the thin lattice $\Pi^\eps$ the propagation of waves is hampered and occurs for very narrow intervals of frequencies.

\section{Properties of the discrete spectrum of $A^{\Omega}$}\label{Section_DiscreteSpectrum}
The goal of this section is to prove Theorem \ref{ThmDiscreteSpectrum}. We start by showing that the discrete spectrum of $A^{\Omega}$ is non empty (for related multidimensional problems, see \cite{Naza12a}). 
\begin{proposition}\label{PropoAtLeastOne}
The discrete spectrum of $A^{\Omega}$ has at least one eigenvalue.
\end{proposition}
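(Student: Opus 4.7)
The plan is to exploit the variational characterization of the bottom of the spectrum. Since $\sigma_e(A^\Om) = [2\pi^2, +\infty)$, to produce an eigenvalue in the discrete spectrum it is enough to display a single trial function $\Phi \in \mrm{H}^1_0(\Om) \setminus \{0\}$ whose Rayleigh quotient is strictly less than $\Lambda_\dagger = 2\pi^2$. I would build such a $\Phi$ as the symmetric sum of the three natural transverse ground states coming from the branches $L_1, L_2, L_3$.

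Denote by $\psi_0(y_1, y_2) = 2\cos(\pi y_1)\cos(\pi y_2)$ the $\mrm{L}^2$-normalized first Dirichlet eigenfunction of the unit square $(-1/2,1/2)^2$, which has eigenvalue $2\pi^2$, and let $\chi : \R \to [0,1]$ be the tent cutoff equal to $1$ on $[-R, R]$ and linearly decaying to $0$ outside $[-R-1, R+1]$. For $j = 1,2,3$ with $\{k, l\} = \{1,2,3\} \setminus \{j\}$, set
\[
\varphi_j(x) := \chi(x_j)\, \psi_0(x_k, x_l) \ \text{ on } L_j, \qquad \varphi_j := 0 \ \text{ on } \Om \setminus L_j, \qquad \Phi := \varphi_1 + \varphi_2 + \varphi_3.
\]
Because $\psi_0$ vanishes on the lateral walls of each tube, the trace of $\varphi_j$ is zero both on $\partial \Om \cap \partial L_j$ and on the internal interfaces $\partial L_j \cap (L_k \cup L_l) \subset \Om$, so each $\varphi_j$ (hence $\Phi$) belongs to $\mrm{H}^1_0(\Om)$.

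The Rayleigh quotient of $\Phi$ splits into diagonal and off-diagonal pieces. The diagonal ones factor by Fubini into $\|\varphi_j\|^2 = \|\chi\|^2_{\mrm{L}^2(\R)}$ and $\|\nabla \varphi_j\|^2 = 2\pi^2 \|\chi\|^2 + \|\chi'\|^2$. The off-diagonal ones are supported only on the unit cube $C = L_j \cap L_k = (-1/2, 1/2)^3$, where every cutoff equals $1$ and every cutoff derivative vanishes; using the elementary integrals $\int_{-1/2}^{1/2}\cos(\pi t)\,dt = 2/\pi$ and $\int_{-1/2}^{1/2}\sin^2(\pi t)\,dt = 1/2$ one computes, for $j \ne k$,
\[
\int_C \varphi_j\varphi_k\,dx = \frac{8}{\pi^2}, \qquad \int_C \nabla \varphi_j \cdot \nabla \varphi_k\,dx = 8.
\]
Summing yields $\|\Phi\|^2 = 3\|\chi\|^2 + 48/\pi^2$ and $\|\nabla \Phi\|^2 = 6\pi^2 \|\chi\|^2 + 3\|\chi'\|^2 + 48$, so the desired inequality $\|\nabla \Phi\|^2 < 2\pi^2 \|\Phi\|^2$ collapses to $3\|\chi'\|^2 < 48$. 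The tent cutoff has $\|\chi'\|^2 = 2$, so the inequality holds and the existence of an eigenvalue $\mu_1 < 2\pi^2$ follows by the min-max principle.

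The only delicate step is the kinetic cross-term on $C$: since $\chi' \equiv 0$ there, only derivatives of the two transverse factors $\psi_0$ overlap, and crucially only in the single common coordinate (for instance $\partial_{x_3}$ in the pair $\varphi_1, \varphi_2$), producing the finite value $8$ independent of $R$. This is what allows the $\mrm{L}^2$ cross-terms $+48/\pi^2$ to overpower the corresponding kinetic cost $+48$ against the threshold $2\pi^2$: each additional branch enlarges the mass more than it enlarges the energy, and this junction-induced gain is the geometric reason why $\mu_1 < \Lambda_\dagger$.
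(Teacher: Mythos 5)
Your proof is correct, and it follows a genuinely different route from the paper. You construct an explicit trial function by summing the cut-off transverse ground states of the three branches and exploit the positive overlap terms on the central cube $(-1/2;1/2)^3$: the cross terms contribute $48$ to the energy but $48/\pi^2$ to the mass, i.e. an ``effective'' quotient $\pi^2<2\pi^2$, which beats the fixed cutoff cost $3\|\chi'\|^2=6$ (I checked the overlap integrals $8/\pi^2$ and $8$; they are right, and the single shared transverse coordinate in each pair is indeed the only source of the kinetic cross term). The paper instead invokes the known result of \cite{ScRW89,ABGM91} that the planar X-shaped waveguide $\Om^{2D}$ has an eigenvalue $\mu^{2D}<\pi^2$, and tensorizes a corresponding eigenfunction with $\cos(\pi x_3)$, extended by zero, to get a trial function with Rayleigh quotient $\mu^{2D}+\pi^2<2\pi^2$. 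Your argument buys self-containedness — no external 2D result is needed, the whole bound is an elementary Fubini computation, and it even yields an explicit numerical upper bound on $\mu_1$ (about $15.5$ for $R=1/2$) — while the paper's route is shorter given the citation and produces the bound $\mu^{2D}+\pi^2$ directly linked to the 2D cross problem. Two small points you should make explicit: take $R\ge 1/2$ so that the cutoff is identically one (with vanishing derivative) on the central cube, otherwise your off-diagonal computation is no longer exact; and state the standard conclusion step, namely that a Rayleigh quotient strictly below $\inf\sigma_{e}(A^\Om)=2\pi^2$ forces $\inf\sigma(A^\Om)$ to be an eigenvalue of finite multiplicity below the essential spectrum, exactly as in the paper's use of the max-min principle (\ref{CaracInfSup}).
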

\begin{proof}
Define the 2D X-shaped geometry $\Om^{2D}:=\Om^{2D}_1\cup\Om^{2D}_2$ with $\Om^{2D}_1:=\R\times(-1/2;1/2)$ and $\Om^{2D}_2:=(-1/2;1/2)\times\R$ (see Figure \ref{2Dexamples} right). According to \cite{ScRW89,ABGM91} (see also \cite[Thm. 2.1]{Naza14b} as well as the discussion at the end of this proof), we know that the Dirichlet Laplacian in $\Om^{2D}$ admits exactly one eigenvalue $\mu^{2D}$ below the continuous spectrum which coincides with $[\pi^2;+\infty)$. Let $\varphi\in\mH^1_0(\Om^{2D})$ be a corresponding eigenfunction. In the domain $\Om\subset\R^3$ introduced in (\ref{DefOmega}), consider the function $v$ such that
\[
\begin{array}{ll}
v(x_1,x_2,x_3) = \varphi(x_1,x_2)\cos(\pi x_3)\ \mbox{ in }L_1\cup L_2,\qquad\qquad &v(x_1,x_2,x_3)=0\ \mbox{ in }\Om\setminus\overline{L_1\cup L_2}.
\end{array}
\]
We have 
\[
\int_{\Om}|\nabla v|^2\,dx=\int_{L_1\cup L_2}|\nabla v|^2\,dx=-\int_{L_1\cup L_2}\Delta v v\,dx=(\mu^{2D}+\pi^2)\int_{\Om} v^2\,dx.
\]
But according to the max-min principle (cf. \cite[Thm. 10.2.2]{BiSo87}), we know that
\begin{equation}\label{CaracInfSup}
\inf \sigma(A^{\Om})=\inf_{w\in\mH^1_0(\Om)\setminus\{0\}}\cfrac{\dsp\int_{\Om}|\nabla w|^2\,dx}{\dsp\int_{\Om}w^2\,dx}\,.
\end{equation}
Inserting the above $v$ in the right hand side of (\ref{CaracInfSup}), we deduce that the discrete spectrum of $A^{\Omega}$ contains an eigenvalue below $\mu^{2D}+\pi^2$.\\
\newline
We end this proof by describing the elegant trick proposed in \cite{ScRW89,ABGM91} to show the existence of an eigenvalue in the discrete spectrum of the Dirichlet Laplacian in $\Om^{2D}$. Consider the square coloured in dark gray of Figure \ref{2Dexamples} right. It has side $\sqrt{2}$. Therefore the first eigenvalue of the Dirichlet Laplacian in this geometry is equal to $\pi^2$. Hence, in any domain containing strictly this square and included in $\Om^{2D}$, the first eigenvalue of the Dirichlet Laplacian is strictly less than $\pi^2$. Extending a corresponding eigenfunction by zero to $\Om^{2D}$ and using the max-min principle, we infer that the discrete spectrum of the Dirichlet Laplacian in $\Om^{2D}$ is not empty.
\end{proof}

\noindent It remains to show that the discrete spectrum of $A^{\Om}$ has at most one eigenvalue. To proceed, first we establish a result of symmetry. 
\begin{lemma}\label{LemmaSym}
Let $v\in\mH^1_0(\Om)$ be an eigenfunction of the operator $A^{\Om}$ associated with an eigenvalue $\mu<2\pi^2$. Then $v$ is symmetric with respect to the three planes $(Ox_2x_3)$, $(Ox_3x_1)$, $(Ox_1x_2)$.
\end{lemma}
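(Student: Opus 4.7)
The plan is to rule out, for each $j\in\{1,2,3\}$, the existence of a non-trivial $\sigma_j$-antisymmetric eigenfunction of $A^{\Om}$ below the threshold $2\pi^2$, where $\sigma_j:x_j\mapsto -x_j$ denotes the reflection across the coordinate plane $\{x_j=0\}$. Since $\Om$ and the Dirichlet Laplacian are both invariant under each $\sigma_j$, splitting $v$ into its $\sigma_j$-even and $\sigma_j$-odd parts produces two eigenfunctions of $A^{\Om}$ with the same eigenvalue $\mu$. By the invariance of $\Om$ under permutations of the coordinates, it is enough to treat $\sigma_1$.

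Set $v_{-}:=\tfrac12(v-v\circ\sigma_1)$ and $\Om^+:=\Om\cap\{x_1>0\}$. Antisymmetry forces $v_{-}=0$ on $\Om\cap\{x_1=0\}$, so that $v_{-}|_{\Om^+}$ lies in $\mH^1_0(\Om^+)$ and still satisfies $-\Delta v_{-}=\mu v_{-}$. The whole lemma thus reduces to the operator-level statement
\[
\inf\sigma\bigl(A^{\Om^+}\bigr)\ \geq\ 2\pi^2,
\]
equivalent to the Friedrichs-type bound $\int_{\Om^+}|\nabla u|^2\,dx\geq 2\pi^2\int_{\Om^+}u^2\,dx$ for every $u\in\mH^1_0(\Om^+)$; this would immediately force $v_{-}\equiv 0$ since $\mu<2\pi^2$.

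To establish the bound I would partition $\Om^+$ into the stem $A:=L_1\cap\{x_1>1/2\}$, the junction block $J:=(0,1/2)\times(-1/2,1/2)^2$, and the four half-infinite slabs $S_j^{\pm}$ constituting $\Om^+\setminus\overline{L_1}$. On the stem $A$ the $(x_2,x_3)$-cross-section is the unit square with full Dirichlet data, so expanding $u$ in the transverse Dirichlet basis gives $\int_A|\nabla u|^2\geq 2\pi^2\int_A u^2$; on each slab $S_j^{\pm}$ the cross-section transverse to the long direction is a $\tfrac12\times 1$ Dirichlet rectangle with first eigenvalue $4\pi^2+\pi^2=5\pi^2$, yielding the stronger $\int|\nabla u|^2\geq 5\pi^2\int u^2$ there.

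The main obstacle is the junction block $J$: its only Dirichlet face is $\{x_1=0\}\times(-1/2,1/2)^2$, and the naive mixed Dirichlet--Neumann 1D Poincar\'e inequality in $x_1$ on $(0,1/2)$ only produces $\int_J|\nabla u|^2\geq\pi^2\int_J u^2$, short of the target $2\pi^2$. To close the gap one must treat $J$ jointly with its neighbours, exploiting that the trace of $u$ on the four free lateral faces of $J$ is not arbitrary but agrees with the slab values, on which the much stronger $5\pi^2$ estimate applies. A careful coupling argument---matching transverse Fourier expansions across these interfaces and using the Dirichlet datum at $x_1=0$, i.e.\ the ``well-chosen Friedrichs inequality'' advertised in the introduction---should recover the missing factor of two on $J$ and deliver $\inf\sigma(A^{\Om^+})\geq 2\pi^2$. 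Once in hand, this contradicts $\mu<2\pi^2$ unless $v_{-}\equiv 0$, and permuting coordinates yields the analogous conclusions for $\sigma_2$ and $\sigma_3$.
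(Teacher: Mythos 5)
Your reduction is exactly the paper's: split off the $\sigma_1$-odd part, observe it lies in $\mH^1_0(\Om\cap\{x_1>0\})$, and try to prove the Friedrichs-type bound $\int|\nabla u|^2\,dx\ge 2\pi^2\int u^2\,dx$ on that half-domain, using the decomposition into the stem $L_1\cap\{x_1>1/2\}$ (bound $2\pi^2$), the four slabs $S_j^\pm$ (bound $5\pi^2$) and the junction block $Q^+_1=(0;1/2)\times(-1/2;1/2)^2$. You also correctly locate the obstruction: the naive mixed Dirichlet--Neumann inequality in the $x_1$ variable only gives $\pi^2$ on $Q^+_1$, which suffices for $\mu<\pi^2$ but not for $\mu<2\pi^2$.

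However, at precisely this point your argument stops: the sentence ``a careful coupling argument \dots should recover the missing factor of two'' is not a proof, and it is where all the actual work of the lemma lies. The paper closes the gap not by matching transverse Fourier expansions across the interfaces, but by applying a one-dimensional Friedrichs inequality on a half-line (Lemma \ref{LemmaPoincareFriedrichs}): for $\phi\in\mH^1(0;+\infty)$, $\kappa(a)\int_0^{1/2}\phi^2\,dt\le\int_0^{+\infty}(\partial_t\phi)^2\,dt+a^2\int_{1/2}^{+\infty}\phi^2\,dt$, with $\kappa(a)$ the smallest root of $\sqrt{\kappa}\tan(\sqrt{\kappa}/2)=a$. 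This is applied in the \emph{transverse} directions $x_2$ (over $S_2=Q^+_1\cup S_2^+\cup S_2^-$) and $x_3$ (over $S_3$) with the specific choice $a=\pi\sqrt{5/2}$, for which $\kappa(a)>\pi^2/2$; each application controls $\tfrac{\pi^2}{2}\int_{Q^+_1}\varphi^2$ by $\int_{S_2}(\partial_{x_2}\varphi)^2+\tfrac{5\pi^2}{2}\int_{S_2^\pm}\varphi^2$ (and similarly for $S_3$), and then only \emph{half} of each slab estimate $5\pi^2\int_{S_j^\pm}\varphi^2\le\int_{S_j^\pm}(\partial_{x_1}\varphi)^2+(\partial_{x_{k}}\varphi)^2$ is spent to absorb these boundary terms, the other half being reserved to produce the required $2\pi^2\int_{S_j^\pm}\varphi^2$. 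Without exhibiting this inequality, the choice of $a$, the verification $\kappa(\pi\sqrt{5/2})>\pi^2/2$, and the bookkeeping showing that the derivative terms used in the several estimates add up to at most $\int_{\Om^+_1}|\nabla\varphi|^2$, the claimed bound $\inf\sigma(A^{\Om^+})\ge2\pi^2$ is unsubstantiated, so your proposal has a genuine gap at its central step.
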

\begin{proof}
We present the proof of symmetry with respect to the plane $(Ox_2x_3)$. The two other symmetries can be established similarly. Introduce the function $\varphi$ such that
\[
\varphi(x_1,x_2,x_3)=v(x_1,x_2,x_3)-v(-x_1,x_2,x_3).
\]
Our goal is to prove that $\varphi\equiv0$. Set $\Om^+_1:=\{(x_1,x_2,x_3)\in\Om\,|\,x_1>0\}$. The function $\varphi$ satisfies  $-\Delta\varphi=\mu\varphi$ in $\Om^+_1$ and $\varphi=0$ on $\partial\Om^+_1$. Therefore we have 
\begin{equation}\label{IdentityEigen}
\int_{\Om^+_1}|\nabla\varphi|^2\,dx=\mu\int_{\Om^+_1}\varphi^2\,dx.
\end{equation}

\begin{figure}[!ht]
\centering\begin{tikzpicture}[scale=0.75]
\pgfmathsetmacro{\cubexa}{8}
\pgfmathsetmacro{\cubeya}{1}
\pgfmathsetmacro{\cubeza}{1}
\begin{scope}[rotate=90]
\draw[black,dotted] (0,-\cubexa/2,-0.5) -- ++(0,-0.8,0);

\draw[black,fill=gray!30] (0.5,0.5,-1) -- ++(0,0,-\cubexa/2) -- ++(-\cubeya/2,0,0) -- ++(0,0,\cubexa/2) -- cycle;
\draw[black,fill=gray!30] (0.5,0.5,-1) -- ++(0,-\cubeza,0) -- ++(-\cubeya/2,0,0) -- ++(0,\cubeza,0) -- cycle;
\draw[black,fill=gray!30] (0.5,0.5,-1) -- ++(0,0,-\cubexa/2) -- ++(0,-\cubeza,0) -- ++(0,0,\cubexa/2) -- cycle;

\draw[black,fill=gray!30] (0.5,\cubexa/2+1,0.5) -- ++(0,-\cubexa/2,0) -- ++(0,0,-\cubeya) -- ++(0,\cubexa/2,0) -- cycle;
\draw[black,fill=gray!30] (0.5,\cubexa/2+1,0.5) -- ++(-\cubeza/2,0,0) -- ++(0,0,-\cubeya) -- ++(\cubeza/2,0,0) -- cycle;
\draw[black,fill=gray!30] (0.5,\cubexa/2+1,0.5) -- ++(0,-\cubexa/2,0) -- ++(-\cubeza/2,0,0) -- ++(0,\cubexa/2,0) -- cycle;

\draw[black,fill=gray!30] (\cubexa/2+1,0.5,0.5) -- ++(-\cubexa/2,0,0) -- ++(0,-\cubeya,0) -- ++(\cubexa/2,0,0) -- cycle;
\draw[black,fill=gray!30] (\cubexa/2+1,0.5,0.5) -- ++(0,0,-\cubeza) -- ++(0,-\cubeya,0) -- ++(0,0,\cubeza) -- cycle;
\draw[black,fill=gray!30] (\cubexa/2+1,0.5,0.5) -- ++(-\cubexa/2,0,0) -- ++(0,0,-\cubeza) -- ++(\cubexa/2,0,0) -- cycle;

\draw[black,fill=gray!30] (0.5,0.5,0.5) -- ++(-0.5,0,0) -- ++(0,-\cubeya,0) -- ++(0.5,0,0) -- cycle;
\draw[black,fill=gray!30] (0.5,0.5,0.5) -- ++(0,0,-\cubeza) -- ++(0,-\cubeya,0) -- ++(0,0,\cubeza) -- cycle;
\draw[black,fill=gray!30] (0.5,0.5,0.5) -- ++(-0.5,0,0) -- ++(0,0,-\cubeza) -- ++(0.5,0,0) -- cycle;

\draw[black,fill=gray!30] (0.5,-1,0.5) -- ++(0,-\cubexa/2,0) -- ++(0,0,-\cubeya) -- ++(0,\cubexa/2,0) -- cycle;
\draw[black,fill=gray!30] (0.5,-1,0.5) -- ++(-\cubeza/2,0,0) -- ++(0,0,-\cubeya) -- ++(\cubeza/2,0,0) -- cycle;
\draw[black,fill=gray!30] (0.5,-1,0.5) -- ++(0,-\cubexa/2,0) -- ++(-\cubeza/2,0,0) -- ++(0,\cubexa/2,0) -- cycle;

\draw[black,fill=gray!30] (0.5,0.5,\cubexa/2+1) -- ++(0,0,-\cubexa/2) -- ++(-\cubeya/2,0,0) -- ++(0,0,\cubexa/2) -- cycle;
\draw[black,fill=gray!30] (0.5,0.5,\cubexa/2+1) -- ++(0,-\cubeza,0) -- ++(-\cubeya/2,0,0) -- ++(0,\cubeza,0) -- cycle;
\draw[black,fill=gray!30] (0.5,0.5,\cubexa/2+1) -- ++(0,0,-\cubexa/2) -- ++(0,-\cubeza,0) -- ++(0,0,\cubexa/2) -- cycle;

\draw[black,dotted] (\cubexa/2+1,0.5,0.5) -- ++(0.8,0,0);
\draw[black,dotted] (\cubexa/2+1,-0.5,0.5) -- ++(0.8,0,0);
\draw[black,dotted] (\cubexa/2+1,0.5,-0.5) -- ++(0.8,0,0);
\draw[black,dotted] (\cubexa/2+1,-0.5,-0.5) -- ++(0.8,0,0);

\draw[black,dotted] (0.5,\cubexa/2+1,0.5) -- ++(0,0.8,0);
\draw[black,dotted] (0.5,\cubexa/2+1,-0.5) -- ++(0,0.8,0);
\draw[black,dotted] (0,\cubexa/2+1,0.5) -- ++(0,0.8,0);
\draw[black,dotted] (0,\cubexa/2+1,-0.5) -- ++(0,0.8,0);
\draw[black,dotted] (0.5,-\cubexa/2-1,0.5) -- ++(0,-0.8,0);
\draw[black,dotted] (0.5,-\cubexa/2-1,-0.5) -- ++(0,-0.8,0);
\draw[black,dotted] (0,-\cubexa/2-1,0.5) -- ++(0,-0.8,0);

\draw[black,dotted] (0.5,0.5,\cubexa/2+1) -- ++(0,0,0.8);
\draw[black,dotted] (0,0.5,\cubexa/2+1) -- ++(0,0,0.8);
\draw[black,dotted] (0.5,-0.5,\cubexa/2+1) -- ++(0,0,0.8);
\draw[black,dotted] (0,-0.5,\cubexa/2+1) -- ++(0,0,0.8);
\draw[black,dotted] (0.5,0.5,-\cubexa/2-1) -- ++(0,0,-0.8);
\draw[black,dotted] (0,0.5,-\cubexa/2-1) -- ++(0,0,-0.8);
\draw[black,dotted] (0.5,-0.5,-\cubexa/2-1) -- ++(0,0,-0.8);

\node at (3,0,0.5){$L_1^+$};
\node at (0,2.6,-1){$S_2^+$};
\node at (0,-3.6,-1){$S_2^-$};
\node at (0,-0.5,2){$S_3^+$};
\node at (0,-0.5,-4.5){$S_3^-$};
\draw[black,->] (1.6,-1.6,0) -- ++(-1,1,0);
\node at (1.8,-2,0){$Q^+_1$};
\end{scope}
\end{tikzpicture}\qquad \begin{tikzpicture}[scale=0.75]
\pgfmathsetmacro{\cubexa}{8}
\pgfmathsetmacro{\cubeya}{1}
\pgfmathsetmacro{\cubeza}{1}
\begin{scope}[rotate=90]
\draw[black,fill=gray!30] (0.5,\cubexa/2+1,0.5) -- ++(0,-\cubexa/2,0) -- ++(0,0,-\cubeya/2) -- ++(0,\cubexa/2,0) -- cycle;
\draw[black,fill=gray!30] (0.5,\cubexa/2+1,0.5) -- ++(-\cubeza/2,0,0) -- ++(0,0,-\cubeya/2) -- ++(\cubeza/2,0,0) -- cycle;
\draw[black,fill=gray!30] (0.5,\cubexa/2+1,0.5) -- ++(0,-\cubexa/2,0) -- ++(-\cubeza/2,0,0) -- ++(0,\cubexa/2,0) -- cycle;

\draw[black,fill=gray!30] (\cubexa/2+1,0.5,0.5) -- ++(-\cubexa/2,0,0) -- ++(0,-\cubeya/2,0) -- ++(\cubexa/2,0,0) -- cycle;
\draw[black,fill=gray!30] (\cubexa/2+1,0.5,0.5) -- ++(0,0,-\cubeza/2) -- ++(0,-\cubeya/2,0) -- ++(0,0,\cubeza/2) -- cycle;
\draw[black,fill=gray!30] (\cubexa/2+1,0.5,0.5) -- ++(-\cubexa/2,0,0) -- ++(0,0,-\cubeza/2) -- ++(\cubexa/2,0,0) -- cycle;

\draw[black,fill=gray!30] (0.5,0.5,0.5) -- ++(-0.5,0,0) -- ++(0,-\cubeya/2,0) -- ++(0.5,0,0) -- cycle;
\draw[black,fill=gray!30] (0.5,0.5,0.5) -- ++(0,0,-\cubeza/2) -- ++(0,-\cubeya/2,0) -- ++(0,0,\cubeza/2) -- cycle;
\draw[black,fill=gray!30] (0.5,0.5,0.5) -- ++(-0.5,0,0) -- ++(0,0,-\cubeza/2) -- ++(0.5,0,0) -- cycle;

\draw[black,fill=gray!30] (0.5,0.5,\cubexa/2+1) -- ++(0,0,-\cubexa/2) -- ++(-\cubeya/2,0,0) -- ++(0,0,\cubexa/2) -- cycle;
\draw[black,fill=gray!30] (0.5,0.5,\cubexa/2+1) -- ++(0,-\cubeza/2,0) -- ++(-\cubeya/2,0,0) -- ++(0,\cubeza/2,0) -- cycle;
\draw[black,fill=gray!30] (0.5,0.5,\cubexa/2+1) -- ++(0,0,-\cubexa/2) -- ++(0,-\cubeza/2,0) -- ++(0,0,\cubexa/2) -- cycle;

\draw[black,dotted] (\cubexa/2+1,0.5,0.5) -- ++(0.8,0,0);
\draw[black,dotted] (\cubexa/2+1,0,0.5) -- ++(0.8,0,0);
\draw[black,dotted] (\cubexa/2+1,0.5,0) -- ++(0.8,0,0);
\draw[black,dotted] (\cubexa/2+1,0,0) -- ++(0.8,0,0);

\draw[black,dotted] (0.5,\cubexa/2+1,0.5) -- ++(0,0.8,0);
\draw[black,dotted] (0.5,\cubexa/2+1,0) -- ++(0,0.8,0);
\draw[black,dotted] (0,\cubexa/2+1,0.5) -- ++(0,0.8,0);
\draw[black,dotted] (0,\cubexa/2+1,0) -- ++(0,0.8,0);

\draw[black,dotted] (0.5,0.5,\cubexa/2+1) -- ++(0,0,0.8);
\draw[black,dotted] (0,0.5,\cubexa/2+1) -- ++(0,0,0.8);
\draw[black,dotted] (0.5,0,\cubexa/2+1) -- ++(0,0,0.8);
\draw[black,dotted] (0,0,\cubexa/2+1) -- ++(0,0,0.8);
\draw[black,->] (1.6,1.6,0) -- ++(-1,-1,0);
\node at (1.8,1.8,0){$Q^+$};
\end{scope}
\end{tikzpicture}
\caption{Left: exploded decomposition of $\Om^+_1$. Right: exploded decomposition of $\Om^+$. \label{ExplodedDomain}}
\end{figure}
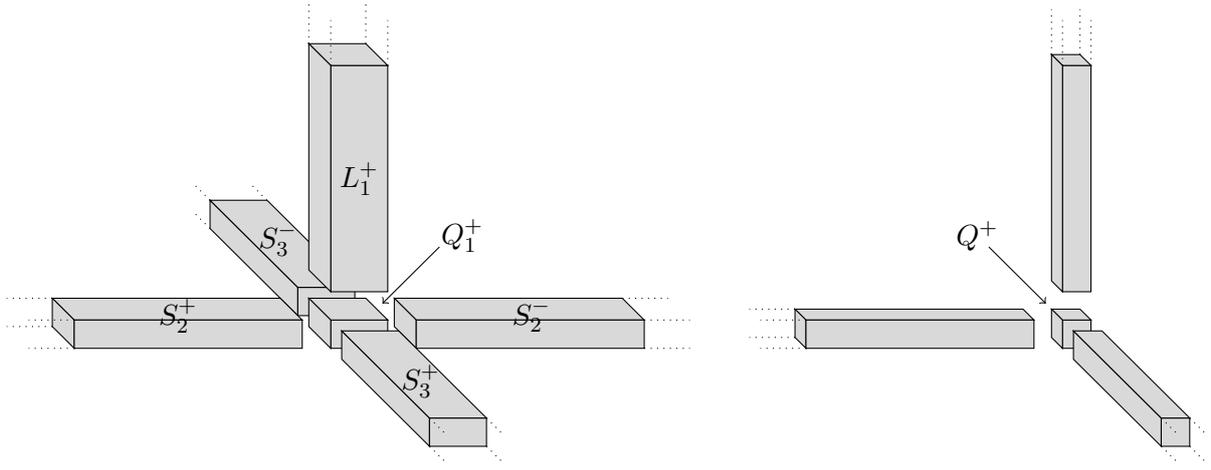

\noindent Define the domains 
\[
\begin{array}{rcl}
L^+_1 & := & \{x\in L_1\,|\,x_1>1/2\}\\[2pt]
S_2 & := & \{x\in L_2\,|\,x_1>0\}=\Om^+_1\cap L_2\\[2pt]
S_3 & := & \{x\in L_3\,|\,x_1>0\}=\Om^+_1\cap L_3\\[2pt]
S^{\pm}_2 & := & \{x\in S_2\,|\,\pm x_2>1/2\}\\[2pt]
S^{\pm}_3 & := & \{x\in S_3\,|\,\pm x_3>1/2\}\\[2pt]
Q^+_1& := & \{x\in\R^3\,|\,0<x_1<1/2,\,|x_2|<1/2,\,|x_3|<1/2\}=S_2\cap S_3
\end{array}
\]
(see the exploded view of Figure \ref{ExplodedDomain} left). Using the Poincar\'e inequality in a section of $L^+_1$, which is a square of unit side, and then integrating with respect to the $x_1$ variable, first we find
\begin{equation}\label{PoincareTranche0}
2\pi^2 \int_{L^+_1}\varphi^2\,dx \le \int_{L^+_1}|\nabla\varphi|^2\,dx.
\end{equation}
In $S^\pm_2$, $S^\pm_3$, since the transverse section is smaller than in $L^+_1$, we have better estimates, namely
\begin{equation}\label{PoincareTranche}
5\pi^2\int_{S_2^\pm}\varphi^2\,dx \le \int_{S_2^\pm}(\partial_{x_1}\varphi)^2+(\partial_{x_3}\varphi)^2\,dx,\qquad 5\pi^2\int_{S_3^\pm}\varphi^2\,dx \le \int_{S_3^\pm}(\partial_{x_1}\varphi)^2+(\partial_{x_2}\varphi)^2\,dx.
\end{equation}
The difficulty to complete the proof however is that in $Q^+_1$, using a 1D  Poincar\'e inequality with respect to the $x_1$ variable, we can simply get 
\begin{equation}\label{PoincareTranche0bis}
\pi^2 \int_{Q^+_1}\varphi^2\,dx \le \int_{Q^+_1}(\partial_{x_1}\varphi)^2\,dx.
\end{equation}
This is enough to conclude directly that $\varphi\equiv0$ when $\mu\in(0;\pi^2)$ but not in the general situation $\mu\in(0;2\pi^2)$ that we wish to deal with. Therefore we have to refine the strategy. What we will do is to exploit the ``extra'' terms coming from the efficiency of estimates (\ref{PoincareTranche}) to control the $\mL^2$-norm of $\varphi$ in $Q^+_1$. More precisely, the Lemma 5.1 of \cite{BaMaNa17} that we recall in Appendix (Lemma \ref{LemmaPoincareFriedrichs}) guarantees that for a given $a>0$, we have the Friedrichs inequality 
\begin{equation}\label{PoincareModified}
\kappa(a)\int_{0}^{1/2}\phi^2\,dt \le \int_{0}^{+\infty}(\partial_t\phi)^2\,dt+a^2\int_{1/2}^{+\infty}\phi^2\,dt,\qquad \forall \phi\in\mH^1(0;+\infty),
\end{equation}
where $\kappa(a)$ is the smallest positive root of the transcendental equation
\begin{equation}\label{TransEqn}
\sqrt{\kappa}\tan\bigg(\cfrac{\sqrt{\kappa}}{2}\bigg)=a.
\end{equation}
In particular, solving (\ref{TransEqn}) with $a=\pi\sqrt{5/2}$, we find $\kappa(a)>\pi^2/2$. Therefore, using (\ref{PoincareModified}), we can write
\begin{equation}\label{PoincareTranche01}
\cfrac{\pi^2}{2}\int_{Q^+_1}\varphi^2\,dx \le \int_{S_2}(\partial_{x_2}\varphi)^2\,dx+\cfrac{5\pi^2}{2}\int_{S_2^+\cup S_2^-}\varphi^2\,dx
\end{equation}
as well as 
\begin{equation}\label{PoincareTranche02}
\cfrac{\pi^2}{2}\int_{Q^+_1}\varphi^2\,dx \le \int_{S_3}(\partial_{x_3}\varphi)^2\,dx+\cfrac{5\pi^2}{2}\int_{S_3^+\cup S_3^-}\varphi^2\,dx.
\end{equation}
Then inserting (\ref{PoincareTranche}) in (\ref{PoincareTranche01}), (\ref{PoincareTranche02}) and summing up the resulting estimates, we obtain
\begin{equation}\label{PoincareTranche1}
\begin{array}{rcl}
\pi^2\dsp\int_{Q^+_1}\varphi^2\,dx &\le&\dsp \int_{S_2}(\partial_{x_2}\varphi)^2\,dx+\cfrac{1}{2}\int_{S_2^+\cup S_2^-}(\partial_{x_3}\varphi)^2\,dx+\dsp \int_{S_3}(\partial_{x_3}\varphi)^2\,dx+\cfrac{1}{2}\int_{S_3^+\cup S_3^-}(\partial_{x_2}\varphi)^2\,dx\\[10pt]
 & & +\dsp\cfrac{1}{2}\int_{S_2^+\cup S_2^-\cup S_3^+\cup S_3^-}(\partial_{x_1}\varphi)^2\,dx.
\end{array}
\end{equation}
On the other hand, (\ref{PoincareTranche}) also yields
\begin{equation}\label{PoincareTranche2}
2\pi^2\int_{S^+_2\cup S^-_2\cup S^+_3\cup S^-_3}\varphi^2\,dx \le \cfrac{1}{2} \int_{S^+_2\cup S^-_2\cup S^+_3\cup S^-_3}(\partial_{x_1}\varphi)^2\,dx+\cfrac{1}{2}\int_{S_2^+\cup S_2^-}(\partial_{x_3}\varphi)^2\,dx+\cfrac{1}{2}\int_{S_3^+\cup S_3^-}(\partial_{x_2}\varphi)^2\,dx.
\end{equation}
Finally, summing up (\ref{PoincareTranche0}), (\ref{PoincareTranche0bis}), (\ref{PoincareTranche1}) and (\ref{PoincareTranche2}), we get 
\[
2\pi^2 \int_{\Om^+_1}\varphi^2\,dx \le \int_{\Om^+_1}|\nabla\varphi|^2\,dx.
\]
This estimate together with the identity (\ref{IdentityEigen}) imply $\varphi\equiv0$.
\end{proof}
\noindent We can now establish the main result of this section.\\[10pt]
\textit{Proof of Theorem \ref{ThmDiscreteSpectrum}.} From Proposition \ref{PropoAtLeastOne}, we know that there is at least one eigenvalue $\mu_1$ of $A^{\Om}$ below $2\pi^2$. Assume that $A^{\Om}$ has a second eigenvalue $\mu_2$ such that $\mu_2<2\pi^2$. Set $\Om^+:=\{x\in\Om\,|\,x_1>0,\,x_2>0,\,x_3>0\}$ (see Figure \ref{ExplodedDomain} right). Then according to the result of symmetry of Lemma \ref{LemmaSym}, the problem 
\[
\begin{array}{|rcll}
-\Delta v &=&\mu v  &\mbox{in }\Om^+\\[3pt]
v&=&0 &\mbox{on }\Sigma_0:=\partial\Om\cap \partial \Om^+\\[3pt]
\partial_nv&=&0 &\mbox{on }\partial \Om^+\setminus\Sigma_0
\end{array}
\]
admits the two eigenvalues $\mu_1$, $\mu_2$. Besides, from the max-min principle (\cite[Thm. 10.2.2]{BiSo87}), we have
\[
\mu_2=\max_{E\subset\mathscr{E}_1}\inf_{w\in E\setminus\{0\}}\cfrac{\dsp\int_{\Om^+}|\nabla w|^2\,dx}{\dsp\int_{\Om^+} w^2\,dx}\,
\]
where $\mathscr{E}_1$ denotes the set of subspaces of $\mH^1_{0}(\Om^+;\Sigma_0):=\{\varphi\in\mH^1(\Om^+)\,|\,\varphi=0\mbox{ on }\Sigma_0\}$ of codimension one. In particular, we have 
\begin{equation}\label{Contra1}
\mu_2 \ge \inf_{w\in E\setminus\{0\}}\cfrac{\dsp\int_{\Om^+}|\nabla w|^2\,dx}{\dsp\int_{\Om^+} w^2\,dx}
\end{equation}
with $E=\{\varphi\in\mH^1_{0}(\Om^+;\Sigma_0)\,|\,\int_{Q^+}\varphi\,dx=0\}$, $Q^+:=(0;1/2)^3$. However from the Poincar\'e inequality, we can write for $w\in E\setminus\{0\}$, 
\begin{equation}\label{IdentityEigenD}
2\pi^2\int_{\Om^+\setminus\overline{Q^+}}w^2\,dx \le \int_{\Om^+\setminus\overline{Q^+}} |\nabla w|^2\,dx
\end{equation}
and there holds, according to the max-min principle, 
\begin{equation}\label{Contra2}
\inf_{w\in E\setminus\{0\}}\cfrac{\dsp\int_{Q^+}|\nabla w|^2\,dx}{\dsp\int_{Q^+} w^2\,dx}=4\pi^2
\end{equation}
because the first positive eigenvalue of the Neumann Laplacian in $Q^+$ is equal to $4\pi^2$. Using (\ref{IdentityEigenD}) and (\ref{Contra2}) in (\ref{Contra1}) leads to $\mu_2\ge2\pi$ which contradicts the initial assumption. Therefore $A^{\Om}$ cannot have two eigenvalues below the continuous spectrum.\hfill $\square$

\section{Absence of threshold resonance}\label{sectionAbsenceofBoundedSol}

In this section, we establish Theorem \ref{ThmNoBoundedSol}. To proceed, we apply the tools of \cite{Pank17,BaNa21} that we recall now. For $R\ge1/2$ and $j=1,2,3$, define the truncated cylinder
\[
L^R_j  :=  \{x\in L_j\,|\,|x_j|<R\}
\]
and set $\Om^R:=L^R_1\cup L^R_2 \cup L^R_3$. In $\Om^R$, consider the spectral problem with mixed boundary conditions
\begin{equation}\label{SpectralRMixed}
\begin{array}{|rcll}
-\Delta v & = & \mu v &\mbox { in }\Om^R\\[2pt]
 v & = & 0 &\mbox{ on }\partial\Om^R\cap\partial\Om\\[2pt]
\partial_n v & = & 0 &\mbox{ on }\partial\Om^R\setminus\partial\Om
\end{array}
\end{equation}
where $n$ stand for the unit normal vector to $\partial\Om^R$ directed to the exterior of $\Om^R$. Set $\mH^1_{0}(\Om^R;\partial\Om^R\cap\partial\Om):=\{\varphi\in\mH^1(\Om^R)\,|\,\varphi=0\mbox{ on }\partial\Om^R\cap\partial\Om\}$ and finally denote by $B^R$ the unbounded operator associated with (\ref{SpectralRMixed}) of domain $\mathcal{D}(B^R)\subset\mH^1_{0}(\Om^R;\partial\Om^R\cap\partial\Om)$.\\
\newline
The Theorem 3 of \cite{Pank17} (see also the necessary and sufficient condition of \cite[Thm. 1]{BaNa21}) guarantees that there is no threshold resonant for Problem (\ref{PbSpectralZoom}) if there exists for some $R\ge0$ such that the second eigenvalue $\mu^R_2$ of $B_R$ satisfies $\mu^R_2>2\pi^2$.\\
\newline
Therefore from now our goal is to show that the second eigenvalue of $B_R$ is larger than $2\pi^2$ (Proposition \ref{PropoBoundedSecond} below). We start with a result of symmetry similar to Lemma \ref{LemmaSym}.

\begin{lemma}\label{LemmaSymR}
For $R$ large enough, if $v\in\mH^1_{0}(\Om^R;\partial\Om^R\cap\partial\Om)$ is an eigenfunction of the operator $B^R$ associated with an eigenvalue $\mu\le2\pi^2$, then $v$ is symmetric with respect to the planes $(Ox_2x_3)$, $(Ox_3x_1)$, $(Ox_1x_2)$.
\end{lemma}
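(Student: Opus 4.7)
The strategy is to adapt the proof of Lemma~\ref{LemmaSym} to the truncated domain with mixed boundary conditions. As before, we consider the antisymmetric reflection $\varphi(x_1,x_2,x_3) := v(x_1,x_2,x_3) - v(-x_1,x_2,x_3)$ and aim to prove $\varphi \equiv 0$ on the half-domain $\Om^{+,R}_1 := \Om^R \cap \{x_1 > 0\}$. The function $\varphi$ satisfies $-\Delta\varphi = \mu\varphi$ in $\Om^{+,R}_1$ with Dirichlet condition on $\partial\Om^R \cap \partial\Om$ and on the symmetry plane $\{x_1 = 0\}$, and inherits the Neumann condition from $v$ at the truncation ends of the arms. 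Integration by parts yields the identity $\int_{\Om^{+,R}_1} |\nabla\varphi|^2 \, dx = \mu \int_{\Om^{+,R}_1} \varphi^2 \, dx$.

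Most of the estimates used to prove Lemma~\ref{LemmaSym} carry over essentially unchanged: the transverse Dirichlet Poincar\'e inequalities (\ref{PoincareTranche0}) and (\ref{PoincareTranche}) in the outer branches (now of finite length $R-1/2$) depend only on the cross-sectional geometry, not on the length nor on the boundary condition at the far end, and the 1D Dirichlet Poincar\'e inequality (\ref{PoincareTranche0bis}) on the central cube $Q^+_1$ is unaffected. The only piece that requires adaptation is the Friedrichs inequality (\ref{PoincareModified}), which in the present truncated setting must be replaced by its analogue on $(0,R)$ with Neumann condition at $R$: for every $\phi \in \mH^1(0,R)$,
\[
\kappa_R(a) \int_0^{1/2} \phi^2 \, dt \le \int_0^{R}(\partial_t\phi)^2 \, dt + a^2 \int_{1/2}^R \phi^2 \, dt,
\]
where $\kappa_R(a)$ is the smallest positive root of $\sqrt{\kappa}\tan(\sqrt{\kappa}/2) = a \tanh(a(R-1/2))$, obtained by matching $\cos(\sqrt{\kappa}t)$ on $(0,1/2)$ with $\cosh(a(R-t))$ on $(1/2,R)$ as in the proof of Lemma~\ref{LemmaPoincareFriedrichs}. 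Since $\tanh(a(R-1/2)) \to 1$ as $R\to +\infty$, we have $\kappa_R(a)\to\kappa(a)$, and for $a=\pi\sqrt{5/2}$ and $R$ sufficiently large the strict inequality $\kappa_R(a) > \pi^2/2$ is preserved.

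Reproducing the chain of estimates (\ref{PoincareTranche0})--(\ref{PoincareTranche2}) verbatim (with $\kappa_R(a)$ replacing the constant $\pi^2/2$ in (\ref{PoincareTranche01})--(\ref{PoincareTranche02})) and summing, one finds, with $\delta_R := 2\kappa_R(a) - \pi^2 > 0$,
\[
2\pi^2 \int_{\Om^{+,R}_1}\varphi^2 \, dx + \delta_R \int_{Q^+_1} \varphi^2 \, dx \le \int_{\Om^{+,R}_1}|\nabla\varphi|^2 \, dx = \mu \int_{\Om^{+,R}_1}\varphi^2 \, dx.
\]
Since by hypothesis $\mu \le 2\pi^2$, this forces $\int_{Q^+_1} \varphi^2 \, dx = 0$, so $\varphi \equiv 0$ on the open cube $Q^+_1$. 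Unique continuation applied to $-\Delta\varphi = \mu\varphi$ then propagates this vanishing throughout the connected open set $\Om^{+,R}_1$, yielding the required symmetry of $v$ with respect to the plane $(Ox_2x_3)$. The two remaining symmetries are established identically. The main obstacle is the derivation of the explicit constant $\kappa_R(a)$ and the verification that the strict inequality $\kappa_R(\pi\sqrt{5/2}) > \pi^2/2$ survives for $R$ large; once this sharp truncated Friedrichs estimate is in hand, the rest of the argument is a direct transcription of the proof of Lemma~\ref{LemmaSym}.
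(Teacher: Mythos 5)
Your argument is correct and follows essentially the same route as the paper's proof: antisymmetric reflection, the same chain of transverse Poincar\'e estimates on the half-domain, the truncated Friedrichs inequality with constant $\kappa(a,R)$ given by $\sqrt{\kappa}\tan(\sqrt{\kappa}/2)=a\tanh(a(R-1/2))$ (this is exactly Lemma~\ref{LemmaPoincareFriedrichsR} in the appendix), the resulting surplus term over $Q^+_1$ for $R$ large, and unique continuation to conclude $\varphi\equiv0$.
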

\begin{proof}
The demonstration follows the lines of the one of Lemma \ref{LemmaSym}. However we write the details for the sake of clarity. We focus our attention on the symmetry with respect to the plane $(Ox_2x_3)$, the two other ones being similar. Introduce the function $\varphi$ such that
\[
\varphi(x_1,x_2,x_3)=v(x_1,x_2,x_3)-v(-x_1,x_2,x_3).
\]
We wish to show that $\varphi\equiv0$. Set $\Om^{R+}_1:=\{x\in\Om^R\,|\,x_1>0\}$. We have 
\begin{equation}\label{IdentityEigenR}
\int_{\Om^{R+}_1}|\nabla\varphi|^2\,dx=\mu\int_{\Om^{R+}_1}\varphi^2\,dx.
\end{equation}
Define the domains 
\[
\begin{array}{rcl}
L^{R+}_1 & := & \{x\in L^R_1\,|\,x_1>1/2\}\\[2pt]
S^R_2 & := & \{x\in L^R_2\,|\,x_1>0\}=\Om^{R+}_1\cap L^R_2\\[2pt]
S^R_3 & := & \{x\in L^R_3\,|\,x_1>0\}=\Om^{R+}_1\cap L^R_3\\[2pt]
S^{R\pm}_2 & := & \{x\in S^R_2\,|\,\pm x_2>1/2\}\\[2pt]
S^{R\pm}_3 & := & \{x\in S^R_3\,|\,\pm x_3>1/2\}.
\end{array}
\]
In $L^{R+}_1$ the Poincar\'e inequality gives
\begin{equation}\label{PoincareTranche0R}
2\pi^2 \int_{L^{R+}_1}\varphi^2\,dx \le \int_{L^{R+}_1}|\nabla\varphi|^2\,dx.
\end{equation}
On the other hand, in $Q^+_1=(0;1/2)\times(-1/2;1/2)^2$ there holds
\begin{equation}\label{PoincareTranche0bisR}
\pi^2 \int_{Q^+_1}\varphi^2\,dx \le \int_{Q^+_1}(\partial_{x_1}\varphi)^2\,dx.
\end{equation}
The Lemma \ref{LemmaPoincareFriedrichsR} in Appendix (see also Lemma 5.1 of \cite{BaMaNa17}) guarantees that for $R>1/2$ and $a>0$, we have
\begin{equation}\label{PoincareModifiedR}
 \kappa(a,R)\int_{0}^{1/2}\phi^2\,dt \le \int_{0}^{R}(\partial_t\phi)^2\,dt+a^2\int_{1/2}^{R}\phi^2\,dt,\qquad \forall\phi\in\mH^1(0;R),
\end{equation}
where $\kappa(a,R)>0$ converges to the constant $\kappa(a)$ appearing in (\ref{TransEqn}) as $R\to+\infty$. For $a=\pi\sqrt{5/2}$, as already said, one finds $\kappa(a)>\pi^2/2$. Introduce $\delta>0$ such that $\kappa(\pi\sqrt{5/2})>\pi^2/2+\delta$. We know that there is $R_0$ large enough such that we have $\kappa(\pi\sqrt{5/2},R)>\pi^2/2+\delta/2$ for all $R\ge R_0$. We infer that we have
\begin{equation}\label{PoincareTranche01R}
\cfrac{\pi^2+\delta}{2}\int_{Q^+_1}\varphi^2\,dx \le \int_{S^R_2}(\partial_{x_2}\varphi)^2\,dx+\cfrac{5\pi^2}{2}\int_{S_2^{R+}\cup S_2^{R-}}\varphi^2\,dx
\end{equation}
and
\begin{equation}\label{PoincareTranche02R}
\cfrac{\pi^2+\delta}{2}\int_{Q^+_1}\varphi^2\,dx \le \int_{S^R_3}(\partial_{x_3}\varphi)^2\,dx+\cfrac{5\pi^2}{2}\int_{S_3^{R+}\cup S_3^{R-}}\varphi^2\,dx.
\end{equation}
But from the Poincar\'e inequality in the transverse section of $S_2^{R\pm}$, $S_3^{R\pm}$, we know that 
\begin{equation}\label{PoincareTrancheR}
5\pi^2\int_{S_2^{R\pm}}\varphi^2\,dx \le \int_{S_2^{R\pm}}(\partial_{x_1}\varphi)^2+(\partial_{x_3}\varphi)^2\,dx,\qquad 5\pi^2\int_{S_3^{R\pm}}\varphi^2\,dx \le \int_{S_3^{R\pm}}(\partial_{x_1}\varphi)^2+(\partial_{x_2}\varphi)^2\,dx.
\end{equation}
Inserting (\ref{PoincareTrancheR}) in (\ref{PoincareTranche01R}), (\ref{PoincareTranche02R}) and summing up the resulting estimates, we obtain
\begin{equation}\label{PoincareTranche1R}
\begin{array}{rcl}
(\pi^2+\delta)\dsp\int_{Q^+_1}\varphi^2\,dx &\le&\dsp \int_{S^R_2}(\partial_{x_2}\varphi)^2\,dx+\dsp \int_{S_3^R}(\partial_{x_3}\varphi)^2\,dx\\[10pt]
 & &+\dsp\cfrac{1}{2}\int_{S_2^{R+}\cup S_2^{R-}}(\partial_{x_3}\varphi)^2\,dx+\cfrac{1}{2}\int_{S_3^{R+}\cup S_3^{R-}}(\partial_{x_2}\varphi)^2\,dx\\[10pt]
 & & +\dsp\cfrac{1}{2}\int_{S_2^{R+}\cup S_2^{R-}\cup S_3^{R+}\cup S_3^{R-}}(\partial_{x_1}\varphi)^2\,dx.
\end{array}
\end{equation}
On the other hand, (\ref{PoincareTrancheR}) also yields
\begin{equation}\label{PoincareTranche2R}
\begin{array}{l}
\dsp2\pi^2\int_{S^{R+}_2\cup S^{R-}_2\cup S^{R+}_3\cup S^{R-}_3}\varphi^2\,dx \\[10pt]
\dsp\le \cfrac{1}{2} \int_{S^{R+}_2\cup S^{R-}_2\cup S^{R+}_3\cup S^{R-}_3}(\partial_{x_1}\varphi)^2\,dx+\cfrac{1}{2}\int_{S_2^{R+}\cup S_2^{R-}}(\partial_{x_3}\varphi)^2\,dx+\cfrac{1}{2}\int_{S_3^{R+}\cup S_3^{R-}}(\partial_{x_2}\varphi)^2\,dx.
\end{array}
\end{equation}
Finally, summing up (\ref{PoincareTranche0R}), (\ref{PoincareTranche0bisR}), (\ref{PoincareTranche1R}) and (\ref{PoincareTranche2R}), we get 
\[
\delta\dsp\int_{Q^+_1}\varphi^2\,dx+2\pi^2 \int_{\Om^{R+}_1}\varphi^2\,dx \le \int_{\Om^{R+}_1}|\nabla\varphi|^2\,dx.
\]
This estimate together with the identity (\ref{IdentityEigenR}) imply $\int_{Q^+_1}\varphi^2\,dx=0$ and so $\varphi\equiv0$ in $Q^+_1$. From the unique continuation principle, this gives $\varphi\equiv0$ in $\Om^{R+}_1$.
\end{proof}

\begin{proposition}\label{PropoBoundedSecond}
For $R$ large enough, the second eigenvalue $\mu_2^R$ of $B^R$ satisfies $\mu_2^R>2\pi^2$. 
\end{proposition}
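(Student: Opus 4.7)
The strategy mirrors the uniqueness argument at the end of the proof of Theorem \ref{ThmDiscreteSpectrum}, transposed from the unbounded half-octant to the truncated one $\Om^{R+}:=\Om^R\cap\{x_1>0,\,x_2>0,\,x_3>0\}$, and strengthened to yield a strict inequality.

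Assume, for contradiction, that $B^R$ admits two eigenvalues $\mu_1^R\le\mu_2^R\le 2\pi^2$. For $R$ large, Lemma \ref{LemmaSymR} ensures that the corresponding eigenfunctions are symmetric with respect to each of the three coordinate planes, so their restrictions to $\Om^{R+}$ are orthogonal eigenfunctions of $-\Delta$ in $\Om^{R+}$ with Dirichlet condition on $\Sigma_0^R:=\partial\Om\cap\partial\Om^{R+}$ and Neumann on the rest of $\partial\Om^{R+}$. The max-min principle applied in $\mH^1_0(\Om^{R+};\Sigma_0^R)$ with the codimension-one subspace
\[
E_0:=\Bigl\{w\in\mH^1_0(\Om^{R+};\Sigma_0^R)\ \Big|\ \int_{Q^+}w\,dx=0\Bigr\},\qquad Q^+:=(0;1/2)^3,
\]
then yields
\[
\mu_2^R\ \ge\ \inf_{w\in E_0\setminus\{0\}}\frac{\int_{\Om^{R+}}|\nabla w|^2\,dx}{\int_{\Om^{R+}}w^2\,dx}.
\]

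Next I would combine two Poincar\'e-type inequalities in the spirit of the proof of Theorem \ref{ThmDiscreteSpectrum}. In each of the three half-arms of $\Om^{R+}\setminus\overline{Q^+}$ the transverse section is the square $(0;1/2)^2$ with Dirichlet on the two outer faces $\{x_k=1/2\}$ and Neumann on the two symmetry faces $\{x_k=0\}$, whose first 2D eigenvalue is $\pi^2+\pi^2=2\pi^2$; integrating along the arm axis gives $2\pi^2\int w^2\le\int|\nabla w|^2$ on each half-arm. On $Q^+$ the Poincar\'e--Wirtinger inequality, whose constant is the first positive Neumann eigenvalue $4\pi^2$ of the cube of side $1/2$, gives $4\pi^2\int_{Q^+}w^2\le\int_{Q^+}|\nabla w|^2$ for every $w\in E_0$. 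Adding these yields
\[
\int_{\Om^{R+}}|\nabla w|^2\,dx\ \ge\ 2\pi^2\int_{\Om^{R+}}w^2\,dx+2\pi^2\int_{Q^+}w^2\,dx.
\]

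The main obstacle is upgrading this non-strict estimate to a strict one, so as to contradict $\mu_2^R\le 2\pi^2$. Since $\Om^{R+}$ is bounded and $E_0$ is closed, the Rayleigh quotient attains its infimum at some $w^\ast\in E_0\setminus\{0\}$, with value $\lambda^\ast\le\mu_2^R$. If $\int_{Q^+}(w^\ast)^2\,dx>0$, the estimate above immediately gives $\lambda^\ast>2\pi^2$, hence $\mu_2^R>2\pi^2$, a contradiction. Otherwise $w^\ast\equiv 0$ on $Q^+$; the Euler--Lagrange equation for the constrained minimization reads $-\Delta w^\ast=\lambda^\ast w^\ast+c\,\chi_{Q^+}$ weakly on $\Om^{R+}$ for some Lagrange multiplier $c\in\R$, and evaluating inside $Q^+$ (where $w^\ast$ vanishes) forces $c=0$. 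Hence $w^\ast$ satisfies $-\Delta w^\ast=\lambda^\ast w^\ast$ throughout the connected set $\Om^{R+}$ while vanishing on the open subset $Q^+$, so Aronszajn's unique continuation theorem yields $w^\ast\equiv 0$, contradicting $w^\ast\ne 0$. Either case leads to $\mu_2^R>2\pi^2$; it is precisely the boundedness of $\Om^{R+}$, which guarantees an attained minimizer, together with unique continuation, that allows the strict conclusion beyond what the Poincar\'e bounds alone would provide.
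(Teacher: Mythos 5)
Your proof is correct and follows essentially the same route as the paper: Lemma \ref{LemmaSymR} reduces the analysis to the octant $\Om^{R+}$, and the max-min argument with the mean-zero constraint on $Q^+$ together with the Poincar\'e constants $2\pi^2$ (half-arms) and $4\pi^2$ (cube) is exactly the paper's proof of Theorem \ref{ThmDiscreteSpectrum} transposed to the truncated domain, which is all the paper's two-line proof invokes. Your explicit treatment of the equality case $\mu_2^R=2\pi^2$ (attained minimizer, vanishing Lagrange multiplier, unique continuation) is a welcome refinement, since the cited argument, as written for Theorem \ref{ThmDiscreteSpectrum}, only excludes eigenvalues strictly below $2\pi^2$ and the paper leaves the strictness of $\mu_2^R>2\pi^2$ implicit.
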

\begin{proof}
One applies Lemma \ref{LemmaSymR} to reduce the analysis to $\Om^{R+}:=\{x\in\Om^R\,|\,x_1>0,\,x_2>0,\,x_3>0\}$. Here in particular we use the assumption that $R$ is large enough. The rest of the proof is completely similar to the one of Theorem \ref{ThmDiscreteSpectrum}. 
\end{proof}

\section{Model problems for the spectral bands}\label{SectionModels}

In this section, we establish Theorem \ref{MainThmPerio} and obtain models for the spectral bands $\Upsilon^\eps_p$ appearing in (\ref{SpectralBands}). We recall that the spectrum of $A^\eps$ is such that $\sigma(A^\eps)=\bigcup_{p\in\N^\ast}\Upsilon^\eps_p$.

\subsection{Study of $\Upsilon^\eps_1$}
By definition, we have 
\begin{equation}\label{Def_FirstBande}
\Upsilon^\eps_1=\{\Lambda_1^\eps(\eta),\,\eta\in[0;2\pi)^3\}
\end{equation}
where $\Lambda_1^\eps(\eta)$ is the first eigenvalue of (\ref{PbSpectralCell}). Therefore our goal is to obtain an asymptotic expansion of $\Lambda_1^\eps(\eta)$ with respect to $\eps$ as $\eps\to0^+$.\\
\newline
Pick $\eta\in[0;2\pi)^3$. Let $u^\eps(\cdot,\eta)$ be an eigenfunction associated with $\Lambda_1^\eps(\eta)$. As a first approximation when $\eps\to0^+$, it is natural to consider the expansion
\begin{equation}\label{ExpansionTR0}
\Lambda_1^\eps(\eta)=\eps^{-2}\mu_1+\dots,\qquad u^\eps(x,\eta)=v_1(x_1/\eps)+\dots
\end{equation}
where $\mu_1\in(0;2\pi^2)$ stands for the eigenvalue of the discrete spectrum of the operator $A^{\Om}$ introduced in Theorem \ref{ThmDiscreteSpectrum} and $v_1\in\mH^1_0(\Om)$ is a corresponding eigenfunction that we choose such that $\|v_1\|_{\mL^2(\Om)}=1$. Indeed, inserting $(\eps^{-2}\mu_1,v_1(\cdot/\eps))$ in Problem (\ref{PbSpectralCell}) only leaves a small discrepancy on the square faces of $\partial\om^\eps$ because $v_1$ is exponentially decaying at infinity. Let us write more precisely the decomposition of $v_1$ at infinity because this will be useful in the sequel. To proceed and keep short notation, we shall work with the coordinates $(z_j,y_j)$, $j=1,\dots,6$, such that
\begin{equation}\label{Def_coor_local}
\begin{array}{|l}
z_1=x_1,\quad y_1=(x_2,x_3);\\[3pt]
z_2=x_2,\quad y_2=(x_3,x_1);\\[3pt]
z_3=x_3,\quad y_3=(x_1,x_2);
\end{array}\qquad\qquad \begin{array}{|l}
z_4=-x_1,\quad y_4=(x_2,-x_3);\\[3pt]
z_5=-x_2,\quad y_5=(x_3,-x_1);\\[3pt]
z_6=-x_3,\quad y_6=(x_1,-x_2).
\end{array}
\end{equation}
We also define for $j=1,\dots,6$, the branch
\[
\mathcal{L}_j:=\{x\in\Om\,|\,z_j>1/2\}.
\]
Then Fourier decomposition together with the result of symmetry of Lemma \ref{LemmaSym} and the fact that $\mu_1$ is a simple eigenvalue\footnote{This is needed to show that $K$ is the same in the branches $\mathcal{L}_j$, $j=1,\dots,6$.} guarantee that for $j=1,\dots,6$, we have 
\[
v_1(x)=K\,e^{-\beta_1 z_j}\,U_{\dagger}(y_j)+O(e^{-\beta_2 z_j})\quad\mbox{ in }\mathcal{L}_j\quad\mbox{ as } z_j\to+\infty.
\]
Here $K\in\R^*$ is independent of $j$, $\beta_1:=\sqrt{2\pi^2-\mu_1}$, $\beta_2:=\sqrt{5\pi^2-\mu_1}$ and 
\begin{equation}\label{defUdagger}
U_{\dagger}(s_1,s_2)=2\cos(\pi s_1)\cos(\pi s_2).
\end{equation}
The first model (\ref{ExpansionTR0}) is interesting but does not comprise the dependence with respect $\eta$. To improve it, consider the more refined ans\"atze
\begin{equation}\label{ExpansionTR1}
\Lambda_1^\eps(\eta)=\eps^{-2}\mu_1+\eps^{-2}e^{-\beta_1/\eps}M(\eta)+\dots,\qquad u^\eps(x,\eta)=v_1(x_1/\eps)+e^{-\beta_1/\eps}V(x_1/\eps,\eta)+\dots
\end{equation}
where the quantities $M(\eta)$, $V(\cdot,\eta)$ are to determine. Inserting (\ref{ExpansionTR1}) into (\ref{PbSpectralCell}), first we obtain that $V(\cdot,\eta)$ must satisfy
\begin{equation}\label{Pb_corrector_TR}
\begin{array}{|rccl}
-\Delta V(\cdot,\eta)-\mu_1V(\cdot,\eta)&=&M(\eta)v_1 & \mbox{ in }\Om\\[3pt]
 V(\cdot,\eta) &=&0 &\mbox{ on }\partial\Om.
\end{array}
\end{equation}
In order to have a non zero solution to (\ref{Pb_corrector_TR}), we must look for a $V(\cdot,\eta)$ which is growing at infinity. Due to (\ref{Pb_corrector_TR}), the simplest growth that we can allow is
\[
V(x,\eta)=B_j\,e^{\beta_1 z_j}\,U_{\dagger}(y_j)+O(e^{-\beta_1 z_j})\quad\mbox{ in }\mathcal{L}_j\quad\mbox{ as } z_j\to+\infty
\]
where the $B_j$ are some constants. Then the quasi-periodic conditions satisfied by $u^\eps(\cdot,\eta)$ at the faces located at $x_1=\pm1/2$, $x_2=\pm1/2$, $x_3=\pm1/2$ (see (\ref{PbSpectralCell})) lead us to choose the $B_j$ such that
\[
\begin{array}{|l}
K+B_4=e^{i\eta_1}(K+B_1)\\[2pt]
K-B_4=e^{i\eta_1}(-K+B_1)
\end{array}\qquad \begin{array}{|l}
K+B_5=e^{i\eta_2}(K+B_2)\\[2pt]
K-B_5=e^{i\eta_2}(-K+B_2)
\end{array}\qquad\begin{array}{|l}
K+B_6=e^{i\eta_3}(K+B_3)\\[2pt]
K-B_6=e^{i\eta_3}(-K+B_3).
\end{array}
\]
Solving these systems, we obtain $B_j=K\,e^{-i\eta_j}$, $B_{j+3}=K\,e^{+i\eta_j}$ for $j=1,2,3$. Now since $\mu_1$ is a simple eigenvalue of $A^\Om$, multiplying (\ref{Pb_corrector_TR}) by $v_1$, integrating by part in $\Om^R$ and taking the limit $R\to+\infty$, we find that there is a solution if and only if the following compatibility condition 
\[
M(\eta)\|v_1\|_{\mL^2(\Om)}=-2\beta_1K^2\sum_{j=1}^3e^{i\eta_j}+e^{-i\eta_j}\qquad\Leftrightarrow\qquad M(\eta)=-4\beta_1K^2\sum_{j=1}^3\cos(\eta_j)
\]
is satisfied. This defines the value of $M(\eta)$ in the expansion (\ref{ExpansionTR1}). From (\ref{Def_FirstBande}), this gives the inclusion $\Upsilon^\eps_1\subset \eps^{-2}\mu_1+\eps^{-2}e^{-\sqrt{2\pi^2-\mu_1}/\eps}(-c_1;c_1)$ for $\eps$ small enough of Theorem \ref{MainThmPerio}. For $c_1$, we can take any constant larger than $12\beta_1K^2$. Note that we decided to focus on a rather formal presentation above for the sake of conciseness. We emphasize that all these results can be completely justified by proving rigorous error. This has been realized in detail in \cite{Naza17,Naza18} for similar problems and can be repeated with obvious modifications.

\subsection{Study $\Upsilon^\eps_k$, $k\ge 2$}

We turn our attention to the asymptotic of the spectral bands of higher frequency
\begin{equation}\label{Def_FirstBande_2}
\Upsilon^\eps_k=\{\Lambda_k^\eps(\eta),\,\eta\in[0;2\pi)^3\},\qquad k\ge2,
\end{equation}
as $\eps\to0^+$. Pick $\eta\in[0;2\pi)^3$ and introduce $u^\eps(\cdot,\eta)$ an eigenfunction associated with $\Lambda_k^\eps(\eta)$. In the sequel, to simplify, we remove the subscript ${}_k$ and do not indicate the dependence on $\eta$. As a first approximation when $\eps\to0^+$, we consider the expansion
\begin{equation}\label{ExpansionTR0_2}
\Lambda^\eps=\eps^{-2}2\pi^2+\nu+\dots,\qquad u^\eps(x)=v^\eps(x)+\dots
\end{equation}
with $v^\eps$ of the form
\[
v^\eps(x)=\begin{array}{|rl}
\gamma^\pm_{1}(x_1)\,U_{\dagger}(x_2/\eps,x_3/\eps)&\mbox{ in }\om^{\eps\pm}_1:=\{x\in\om^{\eps}_1\,|\,\pm x_1>\eps/2\},\\[3pt]
\gamma^\pm_{2}(x_2)\,U_{\dagger}(x_3/\eps,x_1/\eps)&\mbox{ in }\om^{\eps\pm}_2:=\{x\in\om^{\eps}_2\,|\,\pm x_2>\eps/2\},\\[3pt]
\gamma^\pm_{3}(x_3)\,U_{\dagger}(x_1/\eps,x_2/\eps)&\mbox{ in }\om^{\eps\pm}_3:=\{x\in\om^{\eps}_3\,|\,\pm x_3>\eps/2\},\\[3pt]
\end{array}
\]
where the functions $\gamma^\pm_{j}$, $j=1,2,3$, are to determine ($U_{\dagger}$ is defined in (\ref{defUdagger})). Inserting (\ref{ExpansionTR0_2}) into Problem (\ref{PbSpectralCell}), we obtain for $j=1,2,3$,
\begin{equation}\label{farfield}
\begin{array}{|rcl}
\partial^2_s\gamma^+_{j}+\nu\gamma^+_{j}&=&0\quad\mbox{ in }(0;1/2)\\[3pt]
\partial^2_s\gamma^-_{j}+\nu\gamma^-_{j}&=&0\quad\mbox{ in }(-1/2;0)\\[3pt]
\gamma^-_{j}(-1/2)&=&e^{i\eta_j}\gamma^+_{j}(+1/2)\\[3pt]
\partial_s\gamma^-_{j}(-1/2)&=&e^{i\eta_j}\partial_s\gamma^+_{j}(+1/2).
\end{array}
\end{equation}
To uniquely define the $\gamma^\pm_{j}$, we need to complement (\ref{farfield}) with some conditions at the origin. To proceed, we match the behaviour of the $\gamma^\pm_{j}$ with the one of some inner field expansion of $u^\eps$. More precisely, in a neighbourhood of the origin we look for an expansion of $u^\eps$ of the form 
\begin{equation}\label{NearField}
u^\eps(x)=W(x_1/\eps)+\dots
\end{equation}
with $W$ to determine. Inserting (\ref{NearField}) and (\ref{ExpansionTR0_2}) in (\ref{PbSpectralCell}), we find that $W$ must satisfy 
\begin{equation}\label{NearFieldPb}
\begin{array}{|rcll}
\Delta W+2\pi^2W&=&0&\mbox{ in }\Om\\
W&=&0 &\mbox{ on }\partial\Om.
\end{array}
\end{equation}
Now we come to the point where Theorem \ref{ThmNoBoundedSol} appears in the analysis. Indeed it guarantees that the only solution of (\ref{NearFieldPb}) which is bounded at infinity is the null function. Therefore we take $W\equiv0$ and impose 
\begin{equation}\label{farfield_BC}
\gamma^\pm_{j}(0)=0.
\end{equation}
Then solving the spectral problem (\ref{farfield}), (\ref{farfield_BC}), we obtain 
\begin{equation}\label{DefModel1D}
\nu=p^2\pi^2 \mbox{ for }p\in\N^\ast,\qquad  \begin{array}{|l}
\gamma^+_{j}(s)=\sin(p\pi s)\\[2pt]
\gamma^-_{j}(s)=-e^{i\eta_j}\sin(p\pi s).
\end{array}
\end{equation}
Note that $\nu$ is a triple eigenvalue (geometric multiplicity equal to three) because the problems (\ref{farfield}), (\ref{farfield_BC}) for $j=1,2,3$ are uncoupled. Additionally $\nu$ is independent of $\eta\in[0;2\pi)^3$. This latter fact is not completely satisfactory and in the sequel we wish to improve the model obtained above. Let us refine the expansion proposed in (\ref{ExpansionTR0_2}) and work with
\begin{equation}\label{ExpansionTR0_3}
\Lambda^\eps=\cfrac{2\pi^2}{\eps^2}+p^2\pi^2+\eps\tilde{\nu}+\dots,\quad u^\eps(x)=\begin{array}{|rl}
(a_1\gamma^\pm_{1}(x_1)+\eps\tilde{\gamma}^{\pm}_{1}(x_1))\,U_{\dagger}(x_2/\eps,x_3/\eps)&\mbox{ in }\om^{\eps\pm}_1\\[3pt]
(a_2\gamma^\pm_{2}(x_2)+\eps\tilde{\gamma}^{\pm}_{2}(x_2))\,U_{\dagger}(x_3/\eps,x_1/\eps)&\mbox{ in }\om^{\eps\pm}_2\\[3pt]
(a_3\gamma^\pm_{3}(x_3)+\eps\tilde{\gamma}^{\pm}_{3}(x_3))\,U_{\dagger}(x_1/\eps,x_2/\eps)&\mbox{ in }\om^{\eps\pm}_3\\[3pt]
\end{array}+\dots .
\end{equation}
Here $a:=(a_1,a_2,a_3)\in\R^3$, $\tilde{\nu}$ as well as the $\tilde{\gamma}^{\pm}_{j}$, $j=1,2,3$, are to determine. Since we are working with eigenfunctions, we can impose the normalization condition $a_1^2+a_2^2+a_3^2=1$. Inserting (\ref{ExpansionTR0_3}) into Problem (\ref{PbSpectralCell}) and extracting the terms in $\eps$, we get for $j=1,2,3$,
\begin{equation}\label{farfield_3}
\begin{array}{|rcl}
\partial^2_s\tilde{\gamma}^+_{j}+p^2\pi^2\tilde{\gamma}^+_{j}&=&-\tilde{\nu}a_j\gamma^+_j\quad\mbox{ in }(0;1/2)\\[3pt]
\partial^2_s\tilde{\gamma}^-_{j}+p^2\pi^2\tilde{\gamma}^-_{j}&=&-\tilde{\nu}a_j\gamma^-_j\quad\mbox{ in }(-1/2;0)\\[3pt]
\tilde{\gamma}^-_{j}(-1/2)&=&e^{i\eta_j}\tilde{\gamma}^+_{j}(+1/2)\\[3pt]
\partial_s\tilde{\gamma}^-_{j}(-1/2)&=&e^{i\eta_j}\partial_s\tilde{\gamma}^+_{j}(+1/2).
\end{array}
\end{equation}
To define properly the $\tilde{\gamma}^\pm_{j}$, we need to add to (\ref{farfield_3}) conditions at the origin. To identify them, again we match with the behaviour of some inner field representation of $u^\eps$. In a neighbourhood of the origin we look for an expansion of $u^\eps$ of the form 
\begin{equation}\label{NearField_3}
u^\eps(x)=\eps \tilde{W}(x_1/\eps)+\dots.
\end{equation}
Inserting (\ref{NearField_3}) and (\ref{ExpansionTR0_3}) in (\ref{PbSpectralCell}), we find that $\tilde{W}$ must satisfy 
\begin{equation}\label{NearFieldPb_3}
\begin{array}{|rcll}
\Delta \tilde{W}+2\pi^2\tilde{W}&=&0&\mbox{ in }\Om\\
\tilde{W}&=&0 &\mbox{ on }\partial\Om.
\end{array}
\end{equation}
This problem admits solutions $\tilde{W}_{j}$, $j=1,\dots,6$, with the expansions
\begin{equation}\label{DefPolMat1}
\tilde{W}_j(x)=
\begin{array}{|rl}
(z_j+M_{jj})\,U_{\dagger}(y_j)+\dots&\mbox{ in }\mathcal{L}_j\mbox{ as }z_j\to+\infty \\[2pt]
M_{jk}\,U_{\dagger}(y_k)+\dots&\mbox{ in }\mathcal{L}_k\mbox{ as }z_k\to+\infty,\quad k\ne j.
\end{array}
\end{equation}
Here we use the coordinates $(z_j,y_j)$ introduced in (\ref{Def_coor_local}). Note that at infinity $\tilde{W}_j$ is growing only in the branch $\mathcal{L}_j$. Let us explain how to show the existence of these functions (see \cite[Chap. 4, Prop. 4.13]{NaPl94} for more details). Introduce some $\psi\in\mathscr{C}^{\infty}(\R)$ such that $\psi(s)=1$ for $s>1$ and $\psi(s)=0$ for $s\le 1/2$. Then for $j=1,\dots,6$, define $f_j$ such that $f_j(x)=(\Delta+2\pi^2)(\psi(z_j) \,z_jU_{\dagger}(y_j))$. Observe that $f_j$ is compactly supported. Then the theory of \cite[Chap. 5]{NaPl94} together with Theorem \ref{ThmNoBoundedSol} above ensure that there is a solution to the problem
\[
\begin{array}{|rcll}
\Delta \hat{W}_j+2\pi^2\hat{W}_j&=&-f_j&\mbox{ in }\Om\\
\hat{W}_j&=&0 &\mbox{ on }\partial\Om
\end{array}
\]
which is bounded at infinity. Finally we take $\tilde{W}_j$ such that $\tilde{W}_j(x)=\psi(z_j) \,z_jU_{\dagger}(y_j)+\hat{W}_j(x)$. The coefficients $M_{jk}$ form the so-called polarization matrix
\begin{equation}\label{def_Pola_matrix}
\mathbb{M}:=\left(M_{jk}\right)_{1\le j,k\le 6}
\end{equation}
which is real and symmetric even in non symmetric geometries (see \cite[Chap. 5, Prop. 4.13]{NaPl94}).\\
\newline
For the functions $\gamma^{\pm}_{j}$ in (\ref{DefModel1D}), we have the Taylor expansion, as $s\to0$,
\begin{equation}\label{ExpanFarField}
\gamma^+_{j}(s)=0+p\pi s+\dots=\eps p\pi\,\cfrac{s}{\eps}+\dots,\qquad \gamma^-_{j}(s)=0-e^{i\eta_j}p\pi s+\dots=-\eps e^{i\eta_j} p\pi \,\cfrac{s}{\eps}+\dots.
\end{equation}
Comparing (\ref{ExpanFarField}) with (\ref{DefPolMat1}) leads us to choose $\tilde{W}$ in the expansion $u^\eps(x)=\eps \tilde{W}(x_1/\eps)+\dots$ (see (\ref{NearField_3})) such that
\[
\tilde{W}=p\pi \sum_{j=1}^3a_j\,(\tilde{W}_j+e^{i\eta_j}\tilde{W}_{3+j}).
\]
This sets the constant behaviour of $\tilde{W}$ at infinity and we now match the later with the behaviour of the $\tilde{\gamma}^\pm_{j}$ at the origin to close system (\ref{farfield_3}). This step leads us to impose 
\begin{equation}\label{ConditionAtZero}
\begin{array}{ll}
\dsp\tilde{\gamma}^+_1(0)=p\pi \sum_{j=1}^3 a_j\,(M_{j1}+e^{i\eta_j}M_{3+j1}); &\quad\dsp \tilde{\gamma}^-_1(0)=p\pi \sum_{j=1}^3 a_j\,M_{j4}+e^{i\eta_j}M_{3+j4});\\[14pt]
\dsp\tilde{\gamma}^+_2(0)=p\pi \sum_{j=1}^3 a_j\,(M_{j2}+e^{i\eta_j}M_{3+j2}); &\quad\dsp \tilde{\gamma}^-_2(0)=p\pi \sum_{j=1}^3 a_j\,(M_{j5}+e^{i\eta_j}M_{3+j5});\\[14pt]
\dsp\tilde{\gamma}^+_3(0)=p\pi \sum_{j=1}^3 a_j\,(M_{j3}+e^{i\eta_j}M_{3+j3}); &\quad\dsp \tilde{\gamma}^-_3(0)=p\pi \sum_{j=1}^3 a_j\,(M_{j6}+e^{i\eta_j}M_{3+j6}).
\end{array}
\end{equation}
Equations (\ref{farfield_3}), (\ref{ConditionAtZero}) form a boundary value problem for a system of ordinary differential equations. For this problem, there is a kernel and co-cokernel. In order to have a solution, the following compatibility conditions, obtained by multiplying (\ref{farfield_3}) by $\overline{\gamma^\pm_j}$ and integrating by parts,
\[
\overline{\partial_s\gamma^+_j(0)}\,\tilde{\gamma}^+_j(0)-\overline{\partial_s\gamma^-_j(0)}\,\tilde{\gamma}^-_j(0)=-\tilde{\nu}a_j\left(\int_{-1/2}^{0}|\gamma^-_j|^2\,ds+\int_{0}^{1/2}|\gamma^+_j|^2\,ds\right)
\]
must be satisfied for $j=1,2,3$ (note that we used that $\gamma^{\pm}_j(0)=0$ according to (\ref{farfield_BC})). Since $\overline{\partial_s\gamma^+_j(0)}=p\pi$ and $\overline{\partial_s\gamma^-_j(0)}=-p\pi e^{-i\eta_j}$, this gives
\[
2(p\pi)^2\bigg(\sum_{k=1}^3 a_k\,(M_{kj}+e^{i\eta_k}M_{3+kj})+e^{-i\eta_j}\sum_{k=1}^3 a_k\,(M_{k3+j}+e^{i\eta_k}M_{3+k3+j})\bigg)=-\tilde{\nu}a_j.
\]
In a more compact form and by rewriting the dependence with respect to $\eta$, we obtain
\begin{equation}\label{CompactForm}
\mathbb{A}(\eta) a^{\top}=-\tilde{\nu}(\eta)a^{\top},
\end{equation}
with 
\[
\mathbb{A}(\eta):=\Theta(\eta)\mathbb{M}\Theta^\ast(\eta)\in\Cplx^{3\times 3},\quad
\Theta(\eta):=\left(\begin{array}{cccccc}
1 & 0 & 0 & e^{-i\eta_1} & 0 & 0\\[2pt]
0& 1 & 0 & 0 & e^{-i\eta_2} & 0\\[2pt]
0 & 0 & 1 & 0 & 0 & e^{-i\eta_3} 
\end{array}\right),\quad\Theta^{\ast}(\eta)=\overline{\Theta(\eta)}^{\top}.
\]
Above we used that the polarization matrix $\mathbb{M}$ defined in (\ref{def_Pola_matrix}) is symmetric. By solving the spectral problem (\ref{CompactForm}), we get the values for $\tilde{\nu}(\eta)$ and $a$. Once $\tilde{\nu}(\eta)$ and $a$ are known, one can compute the solution to the system (\ref{farfield_3}), (\ref{ConditionAtZero}) to obtain the expressions of the $\tilde{\gamma}_j(\eta)$. This ends the definition of the terms appearing in the expansions (\ref{ExpansionTR0_3}). Let us exploit and comment these results.\\ 
\newline
$\star$ First, observe that the $\tilde{\nu}(\eta)$ are real. Indeed, since $\mathbb{M}$ is real and symmetric, we infer that $\mathbb{A}(\eta)$ is hermitian. \\
\newline
$\star$ We have obtained 
\begin{equation}\label{ExpanConclu}
\Lambda^\eps(\eta)=\eps^{-2}2\pi^2+p^2\pi^2+\eps\tilde{\nu}(\eta)+\dots\,.
\end{equation}
Note that in this expansion, the third term, contrary to the first two ones, depends on $\eta$. For $\eta\in[0;2\pi)^3$, denote by $\tilde{\nu}_1(\eta)$, $\tilde{\nu}_2(\eta)$, $\tilde{\nu}_3(\eta)$ the three eigenvalues of 
(\ref{CompactForm}) numbered such that
\[
\tilde{\nu}_1(\eta) \le \tilde{\nu}_2(\eta) \le \tilde{\nu}_3(\eta).
\]
Define the quantity 
\begin{equation}\label{defAleph}
\aleph:=\bigcup_{j=1}^3\{\tilde{\nu}_j(\eta),\,\eta\in[0;2\pi)^3\}.
\end{equation}
Since $\Upsilon^\eps_{1+q+3(p-1)}=\{\Lambda_{1+q+3(p-1)}^\eps(\eta),\,\eta\in[0;2\pi)^3\}$, this analysis shows that we have the inclusion
\begin{equation}\label{MainInclusion}
\Upsilon^\eps_{1+q+3(p-1)}\subset \eps^{-2}2\pi^2+p^2\pi^2+\eps\,(-c_p;c_p)
\end{equation}
for $\eps$ small enough as stated in Theorem \ref{MainThmPerio}. For $c_p$, we can take any value such that $\overline{\aleph}\subset (-c_p;c_p)$. \\
\newline
$\star$ Due to the symmetries of $\Om$, $\mathbb{M}$ is of the form
\begin{equation}\label{MatPolSimple}
\left(\begin{array}{cccccc}
r_m & t^{\perp}_m & t^{\perp}_m & t_m & t^{\perp}_m & t^{\perp}_m \\[2pt]
t^{\perp}_m & r_m & t^{\perp}_m & t^{\perp}_m & t_m & t^{\perp}_m \\[2pt]
t^{\perp}_m & t^{\perp}_m & r_m & t^{\perp}_m & t^{\perp}_m & t_m \\[2pt]
t_m & t^{\perp}_m & t^{\perp}_m & r_m & t^{\perp}_m & t^{\perp}_m  \\[2pt]
t^{\perp}_m & t_m & t^{\perp}_m & t^{\perp}_m & r_m & t^{\perp}_m   \\[2pt]
t^{\perp}_m & t^{\perp}_m & t_m & t^{\perp}_m & t^{\perp}_m & r_m  
\end{array}\right)
\end{equation}
where $r_m$, $t_m$, $t_m^\perp$ are real coefficients. For $\eta=(0,0,0)$, we find that the eigenvalues of (\ref{CompactForm}) are 
\[
2r_m+2t_m+8t^{\perp}_m,\qquad 2r_m+2t_m-4t^{\perp}_m,\qquad 2r_m+2t_m-4t^{\perp}_m.
\] 
For $\eta=(\pi,\pi,\pi)$, we obtain 
\[
2r_m-2t_m,\qquad 2r_m-2t_m,\qquad  2r_m-2t_m.
\]
In the numerics of \S\ref{Section_PolMat}, we compute $\mathbb{M}$. From the values of $r_m$, $t_m$, $t^{\perp}_m$ obtained in (\ref{ValuePolMat}), we find for example
\[
2r_m+2t_m+8t^{\perp}_m \ne 2r_m-2t_m.
\]
This shows that the set $\aleph$ appearing in (\ref{defAleph}) has a non empty interior. Additionally, since $\nu_1(\pi)=\nu_2(\pi)=\nu_3(\pi)$ and the $\nu_j$ depend continuously on $\eta$, we deduce that $\aleph$ is a connected segment. This is obtained in the numerics of \S\ref{Section_PolMat} (see (\ref{NumAleph})) and due to the symmetries of $\om^\eps$, this was somehow expected.\\
\newline
Finally, let us mention again that all the formal presentation above can be justified rigorously by a direct adaptation of the proofs of error estimates presented in \cite{Naza17,Naza18}.

\section{Numerics}\label{SectionNumerics}

\subsection{Spectrum of $A^{\Om}$}

We start the numerics by computing the spectrum of the operator $A^{\Om}$ defined after (\ref{PbSpectralZoom}). More precisely, in order to approximate also the eigenvalues which are embedded in the continuous spectrum of $A^{\Om}$ and to reveal complex resonances which would be located close to the real axis, we work with Perfectly Matched Layers \cite{Bera94,KiPa10,BoCP18} (see also the techniques of analytic dilatations \cite{aguilar1971class,balslev1971spectral,moiseyev1998quantum}). For $\theta\in(0;\pi/2)$ and $L>1/2$, define the complex valued parameters 
\[
\alpha^\theta_1=\begin {array}{|ll}
1 & \mbox{for }|x_1|\le L\\[2pt]
e^{-i\theta} & \mbox{for }|x_1|>L
\end{array},\qquad \alpha^\theta_2=\begin {array}{|ll}
1 & \mbox{for }|x_2|\le L\\[2pt]
e^{-i\theta} & \mbox{for }|x_2|>L
\end{array},\qquad \alpha^\theta_3=\begin {array}{|ll}
1 & \mbox{for }|x_3|\le L\\[2pt]
e^{-i\theta} & \mbox{for }|x_3|>L.
\end{array}
\]
Here the coefficient $\theta$ will drive the rotation of the continuous spectrum while $L$ marks the beginning of the PML region. Then consider the spectral problem
\begin{equation}\label{ProblemPMLs}
\begin{array}{|rcll}
-\alpha^\theta_1\cfrac{\partial}{\partial x_1}\Big(\alpha^\theta_1\cfrac{\partial u}{\partial x_1}\,\Big)+\alpha^\theta_2\cfrac{\partial}{\partial x_2}\Big(\alpha^\theta_2\cfrac{\partial u}{\partial x_2}\,\Big)+\alpha^\theta_3\cfrac{\partial}{\partial x_3}\Big(\alpha^\theta_3\cfrac{\partial u}{\partial x_3}\,\Big)&=&\Lambda u &\mbox{ in } \Om\\[3pt]
u&=&0&\mbox{ on } \partial\Om.
\end{array}
\end{equation}
Denote by $A^\Om_\theta$ the unbounded operator associated with (\ref{ProblemPMLs}). Observe that $A^\Om_\theta$ is not selfadjoint due to the complex parameters $\alpha^\theta_j$. However the theory guarantees that the real eigenvalues of $A^\Om_\theta$ coincide exactly with the real eigenvalues of $A^{\Om}$. What is interesting is that one can show that the essential spectrum of $A^\Om_\theta$, that is the values of $\Lambda$ such that $A^\Om_\theta-\Lambda\,\mrm{Id}:\mathcal{D}(A^\Om_\theta)\to\mL^2(\Om)$ is not Fredholm, corresponds to the set
\[
\bigcup_{m,n\in\N^\ast}\{\pi^2(m^2+n^2)+t\,e^{-2i\theta},\,t\ge0\},
\]
so that the real eigenvalues of $A^\Om_\theta$ are isolated in the spectrum. As a consequence, we can compute them by truncating the branches of $\Om$ at a certain distance without producing spectral pollution. Then we approximate the spectrum in this bounded geometry by using a classical P1 finite element method. We construct the matrices with the library \texttt{Freefem++} \cite{Hech12} and compute the spectrum with \texttt{Matlab}\footnote{\textit{Matlab}, \url{http://www.mathworks.com/}.}.\\
\newline
In Figure \ref{FigSpectrum}, we display in the complex plane the approximation of the spectrum of $A^\Om_\theta$ obtained with this approach. We observe that the branches of essential spectrum of $A^\Om_\theta$ are somehow discretized. This is due to the fact that the approximated problem is set in finite dimension. Moreover, we note that $A^\Om_\theta$, and so $A^\Om$, have eigenvalues on the real line. In accordance with Theorem \ref{ThmDiscreteSpectrum}, we find exactly one eigenvalue $\mu_1\approx 12.9\approx1.3\pi^2$ on the segment $(0;\pi^2)$. We also note the presence of eigenvalues embedded in the continuous spectrum for the operator $A^{\Om}$. For the first one, we get $\mu_2\approx 46.7\approx4.7\pi^2$. 

\begin{figure}[!ht]
\centering
\includegraphics[width=10cm]{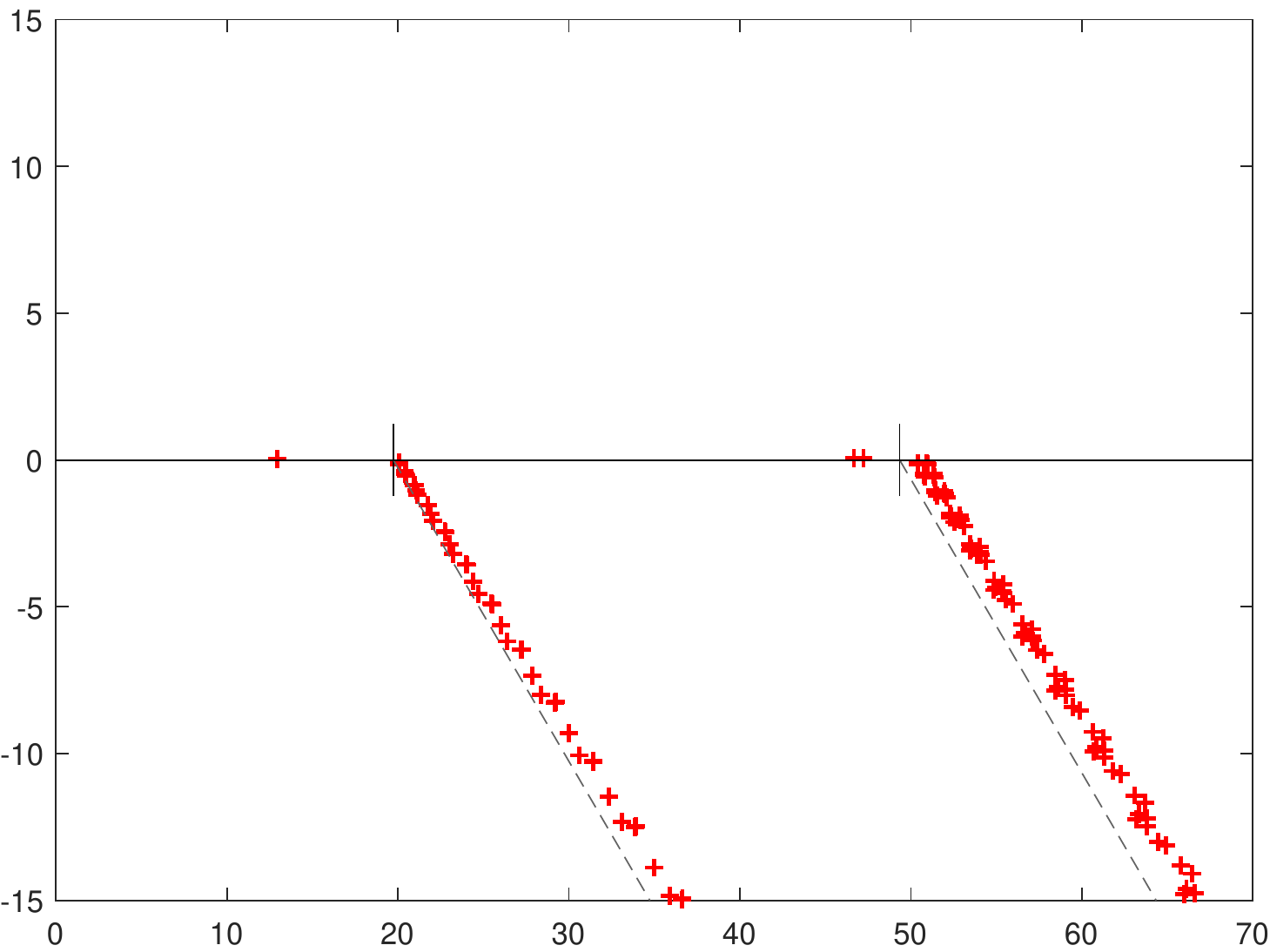}
\caption{Approximation of the spectrum of $A^\Om_\theta$ for $\theta=\pi/4$ ($L=1/2$).}
\label{FigSpectrum}
\end{figure}

\noindent In Figure \ref{FigEigenfunctions}, we represent an eigenfunction (trapped mode) associated with the eigenvalue $\mu_1$ of the discrete spectrum of $A^{\Om}$. As guaranteed by Lemma \ref{LemmaSym}, we observe that it is indeed symmetric with respect to the planes $(Ox_2x_3)$, $(Ox_3x_1)$, $(Ox_1x_2)$.

\begin{figure}[!ht]
\centering
\includegraphics[width=5.8cm]{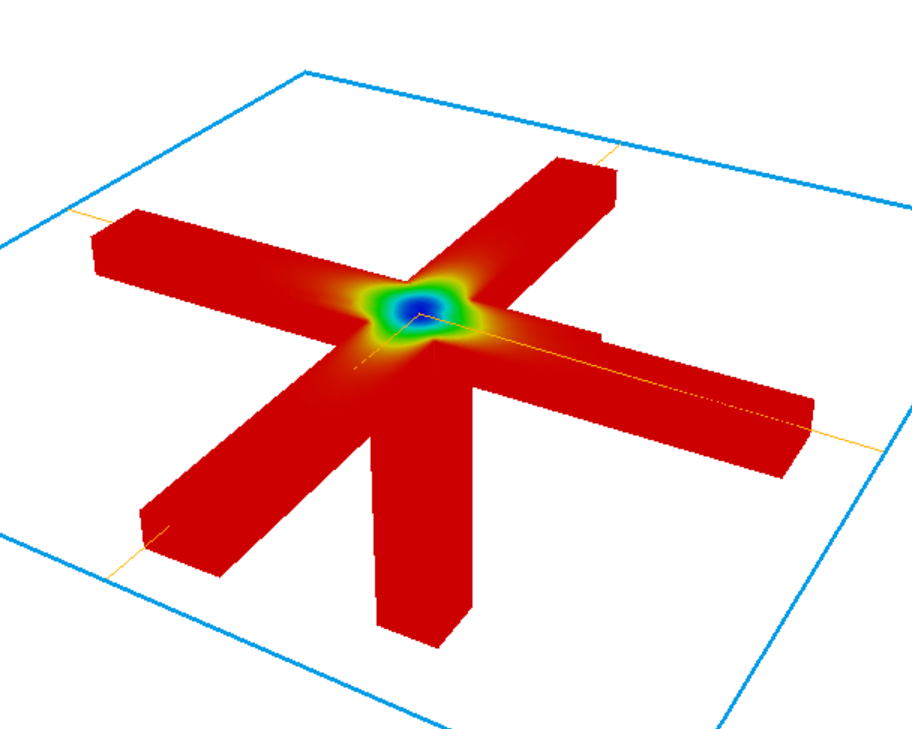}\qquad\quad\includegraphics[width=5.8cm]{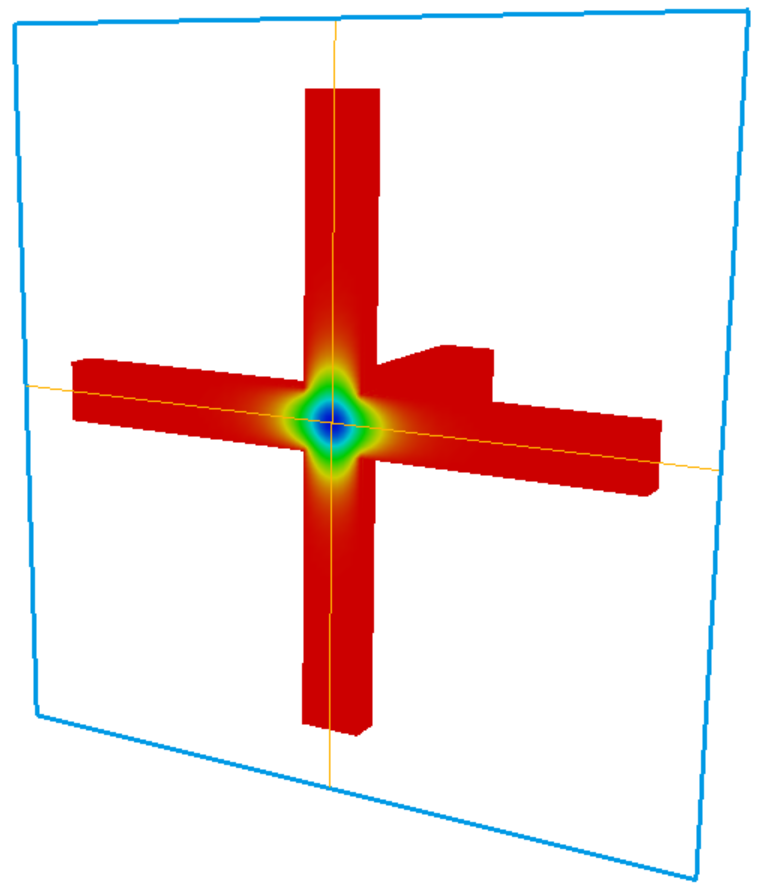}
\caption{Eigenfunction associated with the eigenvalue $\mu_1$: cuts $x_1=0$ (left) and $x_2=0$ (right).}
\label{FigEigenfunctions}
\end{figure}

\subsection{Polarization matrix and threshold scattering matrix for the operator $A^\Om$}\label{Section_PolMat}
In this section, we explain how to compute the polarization matrix $\mathbb{M}$ introduced after (\ref{CompactForm}) and whose properties allow one to assess the second corrector term in the expansion $\Lambda^\eps=\eps^{-2}2\pi^2+p^2\pi^2+\eps\tilde{\nu}+\dots$ of the eigenvalues generating the spectral bands $\Upsilon^\eps_k$, $k\ge 2$. We work in the geometry $\Om$ defined in (\ref{DefOmega}) and study the problem (\ref{PbSpectralZoom}) at the threshold, namely 
\begin{equation}\label{PbSpiderThreshold}
\begin{array}{|rcll}
\Delta v +2\pi^2v&=& 0 &\mbox{on }\Om\\
v&=&0 &\mbox{on }\partial\Om.
\end{array}
\end{equation}
To obtain $\mathbb{M}$, we will first compute the so-called threshold scattering matrix that we define now. In $\mathcal{L}_j$, $j=1,\dots,6$, set 
\[
w_{j}^{\pm}(x)=\cfrac{z_j \mp i}{\sqrt{2}}\,U_{\dagger}(y_j).
\]
Let us work again with the function $\psi\in\mathscr{C}^{\infty}(\R)$ introduced after (\ref{DefPolMat1}) such that $\psi(s)=1$ for $s>1$ and $\psi(s)=0$ for $s\le 1/2$. For $j=1,\dots,6$, define $\psi_j$ such that $\psi_j(x)=\psi(z_j,y_j)$ (observe that $\psi_j$ is non zero only in the branch $\mathcal{L}_j$). For $j=1,\dots,6$, the theory of \cite[Chap. 5]{NaPl94} guarantees that problem (\ref{PbSpiderThreshold}) admits a solution with the decomposition
\begin{equation}\label{decompo_vj}
v_j=\psi_j\,w_j^{-}+\sum_{k=1}^6\psi_k\,s_{jk}\,w_k^++\tilde{u}_j,
\end{equation}
where the $s_{jk}$ are complex numbers and the $\tilde{u}_j$ decay exponentially at infinity. The matrix 
\[
\mathbb{S}:=\left(s_{jk}\right)_{1\le j,k\le 6}
\]
is called the threshold scattering matrix. It is symmetric ($\mathbb{S}=\mathbb{S}^{\top}$) but not necessarily hermitian and unitary ($\mathbb{S}\,\overline{\mathbb{S}}^{\top}=\mrm{Id}$). It is known (see relation (7.9) in \cite{Naza16}) that $\mathbb{M}$ coincides with the Cayley transform of $\mathbb{S}$, i.e. we have
\begin{equation}\label{DefMatPol}
\mathbb{M}=i(\mrm{Id}+\mathbb{S})^{-1}(\mrm{Id}-\mathbb{S}).
\end{equation}
Note that one can show that we have $\dim\,\ker\,(\mrm{Id}+\mathbb{S})=\dim\,(\mathscr{B}/\mathscr{B}_{\mrm{tr}})$ (the quotient space) where $\mathscr{B}$ denotes the space of bounded solutions of (\ref{PbSpiderThreshold}) and $\mathscr{B}_{\mrm{tr}}$ the space of trapped modes of (\ref{PbSpiderThreshold}). For the proof, we refer the reader for example to Theorem 1 in \cite{Naza14}. Since Theorem \ref{ThmNoBoundedSol} ensures that $\mathscr{B}$ reduces to the null function, we infer that $\mrm{Id}+\mathbb{S}$ is invertible which guarantees that $\mathbb{M}$ is well defined via formula (\ref{DefMatPol}). Additionally, due to the symmetries of $\Om$, $\mathbb{S}$ is of the form
\begin{equation}\label{DefMatScaSimplifiee}
\left(\begin{array}{cccccc}
r & t^{\perp} & t^{\perp} & t & t^{\perp} & t^{\perp} \\[2pt]
t^{\perp} & r & t^{\perp} & t^{\perp} & t & t^{\perp} \\[2pt]
t^{\perp} & t^{\perp} & r & t^{\perp} & t^{\perp} & t \\[2pt]
t & t^{\perp} & t^{\perp} & r & t^{\perp} & t^{\perp}  \\[2pt]
t^{\perp} & t & t^{\perp} & t^{\perp} & r & t^{\perp}   \\[2pt]
t^{\perp} & t^{\perp} & t & t^{\perp} & t^{\perp} & r  
\end{array}\right)
\end{equation}
where $r$, $t$, $t^{\perp}$ are complex reflection and transmission coefficients. Therefore it is sufficient to compute $v_1$. To proceed, we shall work in the bounded domain $\Om^R$ (see before (\ref{SpectralRMixed})) and impose approximated radiation conditions on the artificial cuts. Denote by $n$ the unit normal to $\partial\Om^R$ directed to the exterior of $\Om^R$, set 
\[
\begin{array}{l}
\Gamma_1:=\{R\}\times(-1/2;1/2)^2,\quad  \Gamma_2:=(-1/2;1/2)\times\{R\}\times(-1/2;1/2),\quad  \Gamma_3:=(-1/2;1/2)^2\times\{R\}\\[4pt]
\Gamma_4:=\{-R\}\times(-1/2;1/2)^2,\  \Gamma_5:=(-1/2;1/2)\times\{-R\}\times(-1/2;1/2),\  \Gamma_6:=(-1/2;1/2)^2\times\{-R\}
\end{array}
\]
and $\Gamma:=\cup_{j=1}^6\Gamma_j$. On $\Gamma_1$, according to (\ref{decompo_vj}), we have
\[
\partial_n(v_1-\psi_1w_{1}^{-})=\partial_{z_1}(v_1-w_{1}^{-})=2^{-1/2}r\,U_{\dagger}+\dots
\]
where the dots stand for terms which are small for large values of $R$. On the other hand on $\Gamma_1$, there holds
\[
v_1-w_{1}^{-}=2^{-1/2}r\,(R-i)\,U_{\dagger}+\dots .
\]
Therefore, this gives, still on $\Gamma_1$, 
\begin{equation}\label{Approx_RC1}
\partial_n v_1=\cfrac{v_1-w_{1}^{-}}{R-i}+\partial_{z_1} w_{1}^{-}+\dots=\cfrac{v_1}{R-i}+w_{1}^{-}\left(\cfrac{1}{R+i}-\cfrac{1}{R-i}\right)+\dots=\cfrac{v_1}{R-i}-\cfrac{2i}{R^2+1}\,w_{1}^{-}+\dots.
\end{equation}
On $\Gamma_j$, $j\ne1$, the situation is simpler because $v_1$ is outgoing in the corresponding branches and we have
\begin{equation}\label{Approx_RC2}
\partial_n v_1=\cfrac{v_1}{R-i}+\dots .
\end{equation}
Finally, using the Robin conditions (\ref{Approx_RC1}), (\ref{Approx_RC2}) as approximated radiation conditions, we consider the variational formulation
\begin{equation}\label{Pb_Approximation}
\begin{array}{|l}
\mbox{Find }\hat{v}_1\in\mH^1_{0}(\Om^R;\partial\Om^R\cap\partial\Om)\mbox{ such that for all }v\in\mH^1_{0}(\Om^R;\partial\Om^R\cap\partial\Om) \\[6pt]
\dsp\int_{\Om^R}\nabla \hat{v}_1\cdot\nabla v\,dx-\cfrac{1}{R-i}\int_{\Gamma} \hat{v}_1 v\,d\sigma-(2\pi)^2\dsp\int_{\Om^R}\hat{v}_1 v\,dx=-\cfrac{2i}{R^2+1}\int_{\Gamma_1} w_{1}^{-} v\,d\sigma.
\end{array}
\end{equation}
One can prove that $\hat{v}_1$ yields a good approximation of $v_1$ with an error which is exponentially decaying with $R$. In practice, we solve the problem (\ref{Pb_Approximation}) with a P1 finite element method thanks to \texttt{Freefem++}. Then replacing $v_1$ by $\hat{v}_1$ in the exact formulas
\begin{equation}\label{FormulaApproxCoef}
r=\cfrac{2}{R^2+1}\,\int_{\Gamma_1} (v_1-w_{1}^{-})\,w_{1}^{-}\,d\sigma,\quad t=\cfrac{2}{R^2+1}\,\int_{\Gamma_4} v_1\,w_4^{-}\,d\sigma,\quad t^\perp=\cfrac{2}{R^2+1}\,\int_{\Gamma_2} v_1\,w_2^{-}\,d\sigma,
\end{equation}
we get an approximation of the threshold scattering matrix $\mathbb{S}$ given by (\ref{DefMatScaSimplifiee}). Finally with (\ref{DefMatPol}), we obtain an approximation of the polarization matrix $\mathbb{M}$ which appears in the $3\times3$ spectral problems (\ref{CompactForm}).\\
\newline
Our computations give
\[
r\approx 0.66+0.11i,\qquad t\approx 0.08-0.70i,\qquad t^{\perp}\approx -0.08-0.08i.
\]
The eigenvalues of $\mathbb{S}\in\mathbb{C}^{6\times6}$ are approximately equal to 
\[
0.44 - 0.9i\mbox{ (simple)},\quad 0.9 - 0.44i\mbox{ (double)}\quad\mbox{ and }\quad 0.58 + 0.81i\mbox{ (triple).}
\]
They have modulus one which is consistent with the fact that $\mathbb{S}$ is unitary. Moreover, we indeed observe that they are different from $-1$ which is coherent with the discussion following (\ref{DefMatPol}) (absence of threshold resonance). On the other hand, for the coefficients of $\mathbb{M}$ (see (\ref{MatPolSimple})), we get
\begin{equation}\label{ValuePolMat}
r_m\approx 0.08,\qquad t_m\approx -0.44,\qquad t^{\perp}_m\approx -0.06.
\end{equation}
With these values, solving the $3\times3$ eigenvalue problem (\ref{CompactForm}) for $\eta\in[0;2\pi)^3$, we find that the segment $\aleph$ defined in (\ref{defAleph}) satisfies
\begin{equation}\label{NumAleph}
\aleph\approx (-1.24;1.04).
\end{equation}

\noindent Let us mention that the Robin conditions (\ref{Approx_RC1}), (\ref{Approx_RC2}) are rather crude approximations of the exact radiation conditions. To get good errors estimates, we should take rather large values of $R$. However in practice, large $R$ are no so simple to handle and can create important numerical errors. Therefore a compromise must be found and we take $R=2.5$. Admittedly, this point should more investigated.

\section*{Apprendix}

\subsection{Friedrichs inequality}
We reproduce here the Lemma 5.1 of \cite{BaMaNa17}.
\begin{lemma}\label{LemmaPoincareFriedrichs}
Assume that $a>0$. Then we have the Friedrichs inequality 
\begin{equation}\label{Fried_ineq}
 \kappa(a)\int_{0}^{1/2}\phi^2\,dt \le \int_{0}^{+\infty}(\partial_t\phi)^2\,dt+a^2\int_{1/2}^{+\infty}\phi^2\,dt,\qquad \forall \phi\in\mH^1(0;+\infty),
\end{equation}
where $\kappa(a)$ is the smallest positive root of the transcendental equation
\begin{equation}\label{TransEqnLemma}
\sqrt{\kappa}\tan\bigg(\cfrac{\sqrt{\kappa}}{2}\bigg)=a.
\end{equation}
\end{lemma}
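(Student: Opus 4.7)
The plan is to prove the inequality by a ``ground state substitution'' argument: I exhibit the minimizer $\psi$ of the associated Rayleigh quotient explicitly, then write an arbitrary $\phi\in\mH^1(0;+\infty)$ as $\phi=v\psi$ and integrate by parts to recast the difference between the right- and left-hand sides of (\ref{Fried_ineq}) as a manifestly non-negative integral.

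First I would verify that $\kappa(a)\in(0;\pi^2)$: on $(0;\pi^2)$ the map $\kappa\mapsto\sqrt{\kappa}\tan(\sqrt{\kappa}/2)$ is continuous and strictly increasing from $0$ to $+\infty$, so (\ref{TransEqnLemma}) has a unique root in this interval, and it is the smallest positive one. Write $\kappa=\kappa(a)$ and define
\[
\psi(t):=\begin{array}{|ll}
\cos(\sqrt{\kappa}\,t) & \mbox{for }0\le t\le 1/2,\\[3pt]
\cos(\sqrt{\kappa}/2)\,e^{-a(t-1/2)} & \mbox{for }t\ge1/2.
\end{array}
\]
Because $\kappa<\pi^2$, we have $\psi>0$ on $[0;+\infty)$. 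The matching condition $-\sqrt{\kappa}\sin(\sqrt{\kappa}/2)=-a\cos(\sqrt{\kappa}/2)$ is exactly (\ref{TransEqnLemma}), so $\psi\in\mathscr{C}^1[0;+\infty)$; moreover $\psi'(0)=0$, $\psi(t)\to0$ as $t\to+\infty$, $-\psi''=\kappa\psi$ on $(0;1/2)$, and $\psi''=a^2\psi$ on $(1/2;+\infty)$.

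Now for any $\phi\in\mH^1(0;+\infty)$, set $v:=\phi/\psi$, which is well defined since $\psi>0$. Starting from $(\phi')^2=(v')^2\psi^2+(v^2)'\psi\psi'+v^2(\psi')^2$ and integrating over $(0;+\infty)$, I integrate the middle term by parts. The boundary term at $0$ vanishes by $\psi'(0)=0$, and the boundary term at $+\infty$ equals $-a\,\phi^2(+\infty)$ which is zero since $\mH^1$ functions on the half-line vanish at infinity. Splitting $\int_0^{+\infty}v^2(\psi\psi')'\,dt$ into $(0;1/2)$ and $(1/2;+\infty)$ and using $(\psi\psi')'=(\psi')^2+\psi\psi''$ together with the two ODEs satisfied by $\psi$, one arrives at the identity
\[
\int_0^{+\infty}(\partial_t\phi)^2\,dt+a^2\int_{1/2}^{+\infty}\phi^2\,dt=\kappa\int_0^{1/2}\phi^2\,dt+\int_0^{+\infty}(\partial_t v)^2\psi^2\,dt,
\]
and (\ref{Fried_ineq}) follows by dropping the last non-negative term.

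The main obstacle is a careful justification of the integration by parts and of the substitution, since $\psi$ decays exponentially at infinity so $v=\phi/\psi$ may grow. The remedy is to note that $\psi\,\partial_t v=\phi'-\phi\psi'/\psi$, which on $(1/2;+\infty)$ equals $\phi'+a\phi\in\mL^2$, and on $[0;1/2]$ is bounded in $\mL^2$ because $\psi$ is bounded away from zero there; hence $\int(\partial_t v)^2\psi^2\,dt$ is finite and all manipulations are legitimate. A density argument using compactly supported $\mathscr{C}^1$ approximations of $\phi$ can be inserted if one prefers to avoid the pointwise argument at infinity.
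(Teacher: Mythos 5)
Your proof is correct, but it takes a genuinely different route from the paper's. The paper argues variationally: it equips $\mH^1(0;+\infty)$ with the inner product built from the right-hand side of (\ref{Fried_ineq}), uses the Riesz representation theorem together with the Rellich theorem to produce a compact self-adjoint operator $T$ with $(T\phi,\phi')_a=\int_0^{1/2}\phi\phi'\,dt$, identifies the best constant in (\ref{Fried_ineq}) with the smallest eigenvalue of the mixed spectral problem $-\partial_t^2\phi+a^2\mathbbm{1}_{(1/2;+\infty)}\phi=\lambda\,\mathbbm{1}_{(0;1/2)}\phi$, $\partial_t\phi(0)=0$, and only afterwards solves the ODE explicitly to arrive at the transcendental equation (\ref{TransEqnLemma}). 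You go in the opposite direction: starting from the smallest root $\kappa(a)\in(0;\pi^2)$ you construct the explicit positive ground state $\psi$ (the bound $\kappa(a)<\pi^2$ is precisely what makes $\psi>0$, and the $\mathscr{C}^1$ matching at $t=1/2$ is precisely (\ref{TransEqnLemma})), and then the ground-state substitution $\phi=v\psi$ yields the inequality with the explicit non-negative remainder $\int_0^{+\infty}(\partial_t v)^2\psi^2\,dt$. Your route is more elementary and self-contained — no compactness or spectral theory is needed — and it exhibits sharpness immediately (take $\phi=\psi$, i.e. $v\equiv1$); its cost is that one must have the explicit minimizer in hand and carry out the integrability checks you indicate (that $\psi\,\partial_t v=\partial_t\phi+a\phi\in\mL^2$ on $(1/2;+\infty)$, that $\psi'/\psi$ is bounded, and that $\phi(t)\to0$ at infinity, which justifies the boundary term), all of which you handle adequately. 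The paper's argument, in exchange, shows directly that $\kappa(a)$ is the largest constant for which (\ref{Fried_ineq}) can hold and is stated so that it transfers verbatim to the truncated variant of Lemma \ref{LemmaPoincareFriedrichsR}; note that your substitution argument would adapt there as well, with $\cosh(a(t-R))$ replacing the decaying exponential in the ground state.
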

\begin{proof}
For $a>0$, consider the spectral problem 
\begin{equation}\label{Eigen1D}
\begin{array}{|rcll}
-\partial^2_t \phi+a^2\mathbbm{1}_{(1/2;+\infty)} \phi &=& \lambda(a)\,\mathbbm{1}_{(0;1/2)}\phi&\mbox{ in }(0;+\infty)\\[4pt]
\partial_t\phi(0)&=&0
\end{array}
\end{equation}
where $\mathbbm{1}_{(1/2;+\infty)}$, $\mathbbm{1}_{(0;1/2)}$ stand for the indicator functions of the sets $(1/2;+\infty)$, $(0;1/2)$ respectively. Let us equip $\mH^1(0;+\infty)$ with the inner product
\[
(\phi,\phi')_a=\int_{0}^{+\infty}\partial_t\phi\,\partial_t\phi'\,dt+a^2\phi\,\phi'\,dt.
\]
With the Riesz representation theorem, define the linear and continuous operator $T:\mH^1(0;+\infty)\to \mH^1(0;+\infty)$ such that
\[
(T\phi,\phi')_a=\int_{0}^{1/2}\phi\,\phi'\,dt.
\]
With this definition, we find that $(\lambda(a),\phi)$ is an eigenpair of (\ref{Eigen1D}) if and only if we have 
\[
T\phi=(\lambda(a)+a^2)^{-1}\phi.
\]
Since $T$ is bounded and symmetric, it is self-adjoint. Additionally the Rellich theorem ensures that $T$ is compact. This guarantees that the spectrum of (\ref{Eigen1D}) coincides with a sequence of positive eigenvalues whose only accumulation point is $+\infty$. Let us denote by $\kappa(a)$ the smallest eigenvalue of (\ref{Eigen1D}). From classical results concerning compact self-adjoint (see e.g. \cite[Thm. 2.7.2]{BiSo87}), we know that
\begin{equation}\label{Def_quotient}
(\kappa(a)+a^2)^{-1}=\underset{\phi\in\mH^1(0;+\infty),\,(\phi,\phi)_a=1}{\sup} (T\phi,\phi)_a.
\end{equation}
Rearranging the terms, we find that (\ref{Def_quotient}) provides the desired estimates (\ref{Fried_ineq}). Now we compute $\kappa(a)$. Solving the ordinary differential equation (\ref{Eigen1D}) with $\lambda(a)=\kappa(a)$, we obtain, up to a multiplicative constant,
\[
\phi(t)=\begin{array}{|ll}
\cos(\sqrt{\kappa(a)}t) & \mbox{ for }t\in(0;1/2)\\[3pt]
c\,e^{-at} & \mbox{ for }t\ge 1/2
\end{array}
\]
where $c$ is a constant to determine. Writing the transmission conditions at $t=1/2$, we find that a non zero solution exists if and only if $\kappa(a)>0$ satisfies the relation (\ref{TransEqnLemma}).
\end{proof}
\begin{lemma}\label{LemmaPoincareFriedrichsR}
Assume that $a>0$ and $R>1/2$. Then we have the Friedrichs inequality 
\begin{equation}\label{Fried_ineq_R}
 \kappa(a,R)\int_{0}^{1/2}\phi^2\,dt \le \int_{0}^{R}(\partial_t\phi)^2\,dt+a^2\int_{1/2}^{R}\phi^2\,dt,\qquad \forall \phi\in\mH^1(0;R),
\end{equation}
where $\kappa(a,R)$ is the smallest positive root of the transcendental equation
\begin{equation}\label{TransEqnLemmaR}
\sqrt{\kappa}\tan\bigg(\cfrac{\sqrt{\kappa}}{2}\bigg)=a\,\tanh(a(R-1/2)).
\end{equation}
Therefore, we have $\lim_{R\to+\infty}\kappa(a,R)=\kappa(a)$ where $\kappa(a)$ is the constant appearing in Lemma \ref{LemmaPoincareFriedrichs}.
\end{lemma}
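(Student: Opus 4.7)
The plan is to mirror closely the proof of Lemma \ref{LemmaPoincareFriedrichs}, with the infinite interval $(0;+\infty)$ replaced by the bounded one $(0;R)$ and the decaying exponential at infinity replaced by a hyperbolic cosine that automatically encodes a natural (Neumann) boundary condition at $t=R$. First I would equip $\mH^1(0;R)$ with the inner product
\[
(\phi,\phi')_a:=\int_0^R\partial_t\phi\,\partial_t\phi'\,dt+a^2\int_{1/2}^R\phi\,\phi'\,dt,
\]
which is equivalent to the standard $\mH^1(0;R)$-norm: indeed, the trace of $\phi$ at $t=1/2$ is controlled by the fundamental theorem of calculus from $(1/2;R)$ where we have $\mL^2$-control, and then the missing $\mL^2$-norm on $(0;1/2)$ is reconstructed from this trace plus $\int_0^{1/2}(\partial_t\phi)^2\,dt$.

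By the Riesz representation theorem applied to the continuous linear form $\phi'\mapsto \int_0^{1/2}\phi\,\phi'\,dt$, I would next introduce the bounded self-adjoint operator $T_R:\mH^1(0;R)\to\mH^1(0;R)$ defined by $(T_R\phi,\phi')_a=\int_0^{1/2}\phi\,\phi'\,dt$. The Rellich theorem guarantees that $T_R$ is compact. Integration by parts shows that $T_R\phi=(\lambda+a^2)^{-1}\phi$ is equivalent to the spectral problem
\begin{equation*}
\begin{array}{|rcll}
-\partial_t^2\phi+a^2\mathbbm{1}_{(1/2;R)}\phi &=& \lambda\,\mathbbm{1}_{(0;1/2)}\phi & \text{in }(0;R)\\[3pt]
\partial_t\phi(0)=\partial_t\phi(R)&=&0, &
\end{array}
\end{equation*}
the Neumann condition at $t=R$ arising naturally from the absence of imposed boundary data in the test space. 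Denoting by $\kappa(a,R)$ the smallest such $\lambda$, the variational characterization of the largest eigenvalue of $T_R$, exactly as in (\ref{Def_quotient}), produces the Friedrichs inequality (\ref{Fried_ineq_R}) after rearrangement.

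To derive the transcendental equation (\ref{TransEqnLemmaR}), I would compute $\kappa(a,R)$ by solving the ODE explicitly. On $(0;1/2)$ the condition $\partial_t\phi(0)=0$ forces $\phi(t)=\cos(\sqrt{\kappa}\,t)$ up to a multiplicative constant, while on $(1/2;R)$ the equation $-\phi''+a^2\phi=0$ together with $\partial_t\phi(R)=0$ yields $\phi(t)=C\cosh(a(R-t))$. Enforcing the continuity of $\phi$ and of $\partial_t\phi$ at $t=1/2$ and taking the ratio of the two relations eliminates $C$ and yields $\sqrt{\kappa}\tan(\sqrt{\kappa}/2)=a\tanh(a(R-1/2))$, which is exactly (\ref{TransEqnLemmaR}).

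For the limit, I would observe that $\tanh(a(R-1/2))\nearrow 1$ as $R\to+\infty$ and that $\kappa\mapsto\sqrt{\kappa}\tan(\sqrt{\kappa}/2)$ is a continuous strictly increasing bijection from $(0;\pi^2)$ onto $(0;+\infty)$, so the smallest root depends continuously and monotonically on the right-hand side; hence $\kappa(a,R)\nearrow\kappa(a)$. I do not anticipate a genuine obstacle here: the whole proof is a line-by-line transcription of the one of Lemma \ref{LemmaPoincareFriedrichs} with only the slight bookkeeping point of verifying that $(\cdot,\cdot)_a$ remains equivalent to the $\mH^1$-norm on the bounded interval, which is the mildest part to check.
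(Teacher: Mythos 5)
Your proposal is correct and follows essentially the same route as the paper's proof, which simply transfers the argument of Lemma \ref{LemmaPoincareFriedrichs} to $(0;R)$: a compact self-adjoint operator giving the mixed Neumann spectral problem, the explicit solution $\cos(\sqrt{\kappa}\,t)$ / $c\cosh(a(t-R))$ with transmission conditions at $t=1/2$ yielding (\ref{TransEqnLemmaR}), and $\tanh(a(R-1/2))\to1$ for the limit. One harmless bookkeeping slip: with your inner product, where the $a^2$-term is integrated over $(1/2;R)$ only, the operator relation is $T_R\phi=\lambda^{-1}\phi$ rather than $T_R\phi=(\lambda+a^2)^{-1}\phi$ (the latter corresponds to putting the $a^2$-weight on all of $(0;R)$, as in the paper's proof of Lemma \ref{LemmaPoincareFriedrichs}); either convention leads to the same spectral problem and the same inequality after rearrangement.
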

\begin{proof}
The demonstration is completely similar to the one of Lemma \ref{LemmaPoincareFriedrichs} above. We find that the largest constant $ \kappa(a,R)$ such that (\ref{Fried_ineq_R}) holds coincides with the smallest eigenvalue of the problem
\[
\begin{array}{|rcll}
-\partial^2_t \phi+a^2\mathbbm{1}_{(1/2;R)} \phi &=& \kappa(a,R)\mathbbm{1}_{(0;1/2)}\phi&\mbox{ in }(0;R)\\[4pt]
\partial_t\phi(0)=\partial_t\phi(R)&=&0.
\end{array}
\]
Solving it, we find that if $\phi$ is a corresponding eigenfunction, up to a multiplicative constant, we have
\[
\phi(t)=\begin{array}{|ll}
\cos(\sqrt{\kappa(a,R)}t) & \mbox{ for }t\in(0;1/2)\\[3pt]
c\,\cosh(a(t-R)) & \mbox{ for }t\in(1/2;R)
\end{array}
\]
for some constant $c$. This times, writing the transmission conditions at $t=1/2$, we find that a non zero solution exists when $\kappa(a,R)$ satisfies (\ref{TransEqnLemmaR}). Finally we obtain that $\kappa(a,R)\to\kappa(a)$ because $\tanh(a(R-1/2))$ in (\ref{TransEqnLemmaR}) tends to $1$ when $R\to+\infty$.
\end{proof}

\section{Acknowledgements}
The work of the second author was supported by the Russian Science Foundation, project 22-11-00046.

\bibliography{Bibli}
\bibliographystyle{plain}
\end{document}